\newcommand{\A}{\ensuremath \mathcal{A}}
\newcommand{\B}{\ensuremath \mathcal{B}}
\newcommand{\Q}{\ensuremath \mathbb{Q}}
\newcommand{\C}{\ensuremath \mathbb{C}}
\newcommand{\Z}{\ensuremath \mathbb{Z}}
\newcommand{\isom}{\ensuremath \cong}
\theoremstyle{plain}
\newtheorem{theorem}{Theorem}[section]
\newtheorem{proposition}[theorem]{Proposition}
\newtheorem{corollary}[theorem]{Corollary}
\newtheorem{lemma}[theorem]{Lemma}
\theoremstyle{definition}
\newtheorem{definition}[theorem]{Definition}
\newtheorem{example}[theorem]{Example}
\newcommand{\thmref}[1]{Theorem \ref{#1}}
\newcommand{\corref}[1]{Corollary \ref{#1}}
\newcommand{\lemref}[1]{Lemma \ref{#1}}
\newcommand{\propref}[1]{Proposition \ref{#1}}
\newcommand{\defref}[1]{Definition \ref{#1}}
\newcommand{\exampleref}[1]{Example \ref{#1}}
\title{An Isomorphism Extension Theorem For \\ Landau-Ginzburg B-Models}
\author{Nathan Cordner}
\date{July 1, 2016}
\begin{document}

\newgeometry{top = 1 in, left = 1 in, right = 1 in, bottom = 1 in}

\maketitle

\begin{abstract}
Landau-Ginzburg mirror symmetry studies isomorphisms between A- and B-models, which are graded Frobenius algebras that are constructed using a weighted homogeneous polynomial $W$ and a related group of symmetries $G$ of $W$. It is known that given two polynomials $W_{1}$, $W_{2}$ with the same weights and same group $G$, the corresponding A-models built with ($W_{1}$,$G$) and ($W_{2}$,$G$) are isomorphic. Though the same result cannot hold in full generality for B-models, which correspond to orbifolded Milnor rings, we provide a partial analogue. In particular, we exhibit conditions where isomorphisms between unorbifolded B-models (or Milnor rings) can extend to isomorphisms between their corresponding orbifolded B-models (or orbifolded Milnor rings). 
\end{abstract}

\section{Introduction}

Landau-Ginzburg mirror symmetry studies two different physical theories, known as Landau-Ginzburg A- and B-models, which are graded Frobenius algebras that are built using a nondegenerate weighted homogeneous polynomial $W$ and a related group of symmetries $G$ of $W$. The A-model theories (denoted by $\A$) have been constructed \cite{FJR07}, and are a special case of what is known as \emph{FJRW theory}. The B-model theories (denoted by $\B$) have also been constructed \cite{IV,Ka1,Ka2,Ka3,Kra09}, and correspond to an \emph{orbifolded Milnor ring}. In many cases, these theories extend to whole families of Frobenius algebras, called \emph{Frobeinus manifolds}.

For a large class of polynomials, Berglund-H\"ubsch \cite{BH}, Henningson \cite{Hen}, and Krawitz \cite{Kra09} described the construction of a dual (or transpose) polynomial $W^{T}$ and a dual group $G^{T}$. The Landau-Ginzburg mirror symmetry conjecture states that the A-model of a pair $(W,G)$ should be isomorphic to the B-model of the dual pair $(W^{T}, G^{T})$, and is denoted as $\A[W,G] \isom \B[W^{T}, G^{T}]$. This conjecture has been proven in many cases \cite{FJJS11, Kra09}, although the proof of the full conjecture remains open. To better understand mirror symmetry, it has been fruitful to focus on studying isomorphisms between Landau-Ginzburg models of the same type:  either from A to A, or from B to B.

The Landau-Ginzburg A-model is \emph{deformation invariant} (see \cite{Tay13}). Given two polynomials with the same weights, and an admissible symmetry group that fixes both polynomials, there exists a continuous path to deform one polynomial to the next. All the corresponding A-models along such a path are isomorphic as graded Frobenius algebras---this result is sometimes called the \emph{Group-Weights Theorem}. The same result does not hold for B-models (see \exampleref{bmodelcondefcounterex}).

The unorbifolded Landau-Ginzburg B-model, which is built using the trivial group $G$, corresponds to the Milnor ring (or local algebra) of a polynomial $W$ and is often denoted as $\mathcal{Q}_{W}$. The original construction of the vector space structure of the orbifolded Milnor ring, or orbifolded B-model, was given by Intriligator and Vafa \cite{IV}. The product structure remained undefined for many years. Recently, Krawitz \cite{Kra09} followed ideas presented by Kaufmann \cite{Ka1,Ka2,Ka3} to write down a multiplication for the orbifolded Milnor ring.

Classical singularity theory has widely studied Milnor rings of polynomials and their related isomorphisms. In this paper, we look at providing a partial Group-Weights result for orbifolded Milnor rings. That is, we look at extending known isomorphisms between Milnor rings to isomorphisms between orbifolded Milnor rings that have the product structure defined by Krawitz \cite{Kra09}. We will often refer to these orbifolded Milnor rings as Landau-Ginzburg B-models.

We approach this problem by focusing on special choices of polynomials and groups. Building on ideas presented in \cite{FJJS11}, we arrive at the following conditions for a polynomial/group pair. 

\begin{definition}\label{wellbehaved}
A pair $(W,G)$ is \emph{well behaved} if $W = \sum W_{i}$, where each $W_{i}$ is an admissible polynomial in distinct variables, and $G = \bigoplus G_{i}$, where each $g \in G_{i}$ either fixes all or none of the variables of $W_{i}$ for each $i$.
\end{definition}

As we will note later, a large class of polynomial/group pairs that satisfy this condition include the two-variable \emph{admissible} polynomials together with any of their symmetry groups. However, some polynomials in three or more variables (such as \emph{chain} polynomials) may have choices of symmetry groups that do not form well-behaved pairs.

We note that \defref{wellbehaved} is similar to Property (*) of \cite{FJJS11} (see also \defref{propertystar}). We also require that the particular isomorphism between Milnor rings be \emph{equivariant}. That is, when applying a nontrivial group of symmetries, the isomorphism respects the group action on the Milnor ring's vector space basis. The following theorem is the main result of the paper. 

\vspace{0.1 in}

\noindent \textbf{\thmref{mainresult}.} \emph{Let $(W,G)$ and $(V, G)$ be well behaved. If $\phi: \mathcal{Q}_{W} \rightarrow \mathcal{Q}_{V}$ is an equivariant isomorphism of graded Frobenius algebras, then $\phi$ extends to an isomorphism $\psi: \B[W, G] \rightarrow \B[V, G]$.}

\vspace{0.1 in}

It turns out that when Milnor rings are isomorphic, the corresponding polynomials will have the same weights (up to ordering of variables, see \thmref{isommilnorringweights}). Therefore \thmref{mainresult} is a partial Group-Weights theorem for orbifolded Milnor rings. In Section 4 we give examples of cases where this theorem applies.

\section{Preliminaries}

Here we will introduce some of the concepts needed to understand the theory of this paper. 

\subsection{Admissible Polynomials}

\begin{definition}
For a polynomial $W \in \C[x_{1},\dots,x_{n}]$, we say that $W$ is \textit{nondegenerate} if it has an isolated critical point at the origin.
\end{definition}

\begin{definition}
Let $W \in \C[x_{1},\dots,x_{n}]$. We say that $W$ is \textit{quasihomogeneous} if there exist positive rational numbers $q_{1},\dots,q_{n}$ such that for any $c \in \C$, $W(c^{q_{1}}x_{1},\dots,c^{q_{n}}x_{n}) = c W(x_{1},\dots,x_{n})$.
\end{definition}

We often refer to the $q_{i}$ as the \textit{quasihomogeneous weights} of a polynomial $W$, or just simply the \textit{weights} of $W$, and we write the weights in vector form $J = (q_{1}, \dots, q_{n})$. 

\begin{definition}
$W \in \C[x_{1},\dots,x_{n}]$ is \emph{admissible} if $W$ is nondegenerate and quasihomogeneous with unique weights, having no monomials of the form $x_{i}x_{j}$ for $i \ne j$.
\end{definition}

The condition that $W$ have no cross-term monomials is necessary for constructing the A-model. Because the construction of $\A[W,G]$ requires an admissible polynomial, we will only be concerned with admissible polynomials in this paper. In order for a polynomial to be admissible, it needs to have at least as many monomials as variables. Otherwise its quasihomogeneous weights cannot be uniquely determined. We now state the main subdivision of the admissible polynomials.

\begin{definition}
Let $W$ be an admissible polynomial. We say that $W$ is \textit{invertible} if it has the same number of monomials as variables. If $W$ has more monomials than variables, then it is \textit{noninvertible}.
\end{definition}

Admissible polynomials with the same number of variables as monomials are called invertible, since their associated exponent matrices are square and invertible. 

\begin{definition}
Let $W \in \C[x_{1},\dots, x_{n}]$. If we write $W$ as a sum of monomials $W = \sum_{i = 1}^{m} c_{i} \prod_{j = 1}^{n} x_{j}^{a_{ij}}$, then the associated \textit{exponent matrix} is defined to be $A = (a_{ij})$. 
\end{definition}

We further observe that if $W$ is invertible, we can rescale variables to force each coefficient $c_{i}$ to equal one---which we will do in this paper. The invertible polynomials can also be decomposed into sums of three types of polynomials, called the \emph{atomic types}.

\begin{theorem}[Theorem 1 of \cite{KS}]
Any invertible polynomial is the decoupled sum of polynomials in one of three atomic types:
\begin{align*}
\begin{array}{rl}\text{Fermat type: } & W = x^{a}, \\ 
\text{Loop type: } & W = x_{1}^{a_{1}}x_{2} + x_{2}^{a_{2}}x_{3} + \dots + x_{n}^{a_{n}}x_{1}, \\
\text{Chain type: } & W = x_{1}^{a_{1}}x_{2} + x_{2}^{a_{2}}x_{3} + \dots + x_{n}^{a_{n}}.
\end{array}
\end{align*}
\end{theorem}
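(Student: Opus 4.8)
The plan is to reduce the statement to a combinatorial analysis of the $n \times n$ exponent matrix $A = (a_{ij})$ of the invertible polynomial $W = \sum_{i=1}^{n} \prod_{j=1}^{n} x_j^{a_{ij}}$, where I normalize all coefficients to $1$ (as noted in the excerpt, this is permissible for invertible $W$), and then read off the atomic decomposition from the shape of $A$. First I would record the two standing facts that make the combinatorics rigid: because the weights are the unique solution of the linear system $A\,\vec q = \vec 1$, the square matrix $A$ is invertible, so $\det A \neq 0$; and because $W$ is nondegenerate, the only common zero of the partials $\partial_1 W, \ldots, \partial_n W$ is the origin.

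The first genuine step is to set up a diagonal. Since $\det A \neq 0$, the Leibniz expansion has a nonvanishing term, so there is a permutation $\sigma$ with $a_{i\sigma(i)} \geq 1$ for all $i$; after relabeling the monomials by $\sigma^{-1}$ I may assume $a_{ii} \geq 1$ for every $i$, that is, each monomial $M_i$ contains its own variable $x_i$. I then define a directed graph $\Gamma$ on vertices $\{1, \ldots, n\}$ with an arrow $i \to j$ whenever $j \neq i$ and $x_j$ occurs in $M_i$; the goal becomes to show that $\Gamma$ is a disjoint union of simple directed paths and simple directed cycles, each visiting a set of variables disjoint from the others.

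The crux is the following structural lemma, and it is exactly where nondegeneracy does all the work: after the relabeling above, each monomial is either a pure power $M_i = x_i^{a_i}$ with $a_i \geq 2$ (Fermat), or $M_i = x_i^{a_i} x_{c(i)}$ with $a_i \geq 2$, $c(i) \neq i$, and the companion variable $x_{c(i)}$ occurring to the first power; moreover no variable is the companion of two distinct monomials. I would prove each clause by contradiction, exhibiting a nonzero critical point of $W$ whenever it fails. For instance, if some $M_i$ contained three or more distinct variables, or two variables each to a power $\geq 2$, or a companion appearing to a power $\geq 2$, then restricting to the appropriate coordinate subspace (setting the non-involved variables to $0$) makes the relevant partials vanish along a positive-dimensional locus; and if a variable $x_j$ were the companion of two monomials $M_i, M_k$, then on the subspace $x_j = 0$ the system $\nabla W = 0$ reduces to a quadric cone rather than a single point, as in the sample computation for $x_1^2 x_3 + x_2^2 x_3 + x_3^2$ on $\{x_3 = 0\}$. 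Both scenarios contradict the isolated-singularity hypothesis. The no-cross-term condition of admissibility is what rules out the remaining shape $M_i = x_i x_j$ and forces $a_i \geq 2$.

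With the lemma in hand the finish is purely graph-theoretic. The lemma says precisely that every vertex of $\Gamma$ has out-degree at most $1$ (at most one companion per monomial) and in-degree at most $1$ (each variable is a companion at most once), so every connected component of $\Gamma$ is a simple path or a simple cycle. A cyclic component $i_1 \to i_2 \to \cdots \to i_m \to i_1$ reassembles its monomials into $x_{i_1}^{a_1} x_{i_2} + \cdots + x_{i_m}^{a_m} x_{i_1}$, a loop; a path component $i_1 \to \cdots \to i_m$, whose terminal vertex carries a pure power, gives $x_{i_1}^{a_1} x_{i_2} + \cdots + x_{i_m}^{a_m}$, a chain; and an isolated vertex gives a Fermat monomial $x_i^{a_i}$. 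Because distinct components involve disjoint variable sets, $W$ is the decoupled sum of these atomic pieces, which is the assertion. I expect the structural lemma to be the main obstacle: translating the analytic isolated-singularity condition into these rigid monomial shapes, and in particular pinning down that companions must appear linearly and uniquely, is where all the content lies, whereas the matching step and the graph decomposition are bookkeeping.
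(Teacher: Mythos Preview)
The paper does not supply a proof of this theorem; it is quoted as Theorem~1 of \cite{KS} and used as background, so there is no in-paper argument to compare against. Your overall strategy---prove a structural lemma forcing every monomial into the shape $x_i^{a_i}$ or $x_i^{a_i}x_{c(i)}$ with $c$ injective where defined, then read off the decomposition from a directed graph whose in- and out-degrees are all at most one---is precisely the Kreuzer--Skarke argument, and the graph-theoretic finish is correct as you have written it.

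The gaps are all inside the structural lemma. First, the permutation extracted from $\det A\neq 0$ only guarantees $a_{ii}\ge 1$; it need not make $x_i$ the high-power variable of $M_i$. For the two-variable loop one admissible diagonal choice labels the monomials as $M_1=x_1x_2^{2}$, $M_2=x_2x_1^{2}$, and then neither $M_i$ has the asserted shape, so your lemma is literally false for that labeling. Second, your claim that a monomial with three variables, or with two variables each to power $\ge 2$, forces a positive-dimensional critical locus by ``setting the non-involved variables to $0$'' ignores the contributions of the remaining $n-1$ monomials to $\nabla W$ on that subspace; a single bad monomial does not by itself produce the contradiction. The clean way to handle both issues at once is to test nondegeneracy along each coordinate axis: at $(0,\dots,t,\dots,0)$ some partial must be nonzero, which forces the existence of a monomial of the form $x_i^{a}$ or $x_i^{a}x_k$ with $a\ge 2$; since each such monomial can witness only one variable and there are exactly $n$ monomials, pigeonhole shows \emph{every} monomial already has this shape, and this simultaneously furnishes the correct labeling. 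Third, for the in-degree bound your computation on $\{x_j=0\}$ is specific to $n=3$; in general the subspace to use is the two-plane where only $x_i$ and $x_k$ survive, on which one checks directly that $\partial W/\partial x_m=0$ for every $m\neq j$ while $\partial W/\partial x_j=x_i^{a_i}+x_k^{a_k}$, giving a curve of nonzero critical points. With these fixes the argument goes through.
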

We also assume that the $a_{i} \ge 2$ to avoid terms of the form $x_{i}x_{j}$ for $i \ne j$. 

\subsection{Symmetry Groups}

\begin{definition}
Let $W$ be an admissible polynomial. We define the \textit{maximal diagonal symmetry group} of $W$ to be $G_{W}^{\max} = \{(\zeta_{1}, \dots, \zeta_{n}) \in (\C^\times)^{n} \mid W(\zeta_{1}x_{1},\dots,\zeta_{n}x_{n}) = W(x_{1},\dots,x_{n})    \}$.
\end{definition}

The proofs of Lemma 2.1.8 in \cite{FJR07} and Lemma 1 in \cite{ABS11} show that $G_{W}^{\max}$ is finite and that each coordinate of every group element is a root of unity. The group operation in $G_{W}^{\max}$ is coordinate-wise multiplication. But since additive notation is often more convenient, we use the map $(e^{2\pi i \theta_{1}},\dots,e^{2\pi i \theta_{n}}) \mapsto (\theta_{1}, \dots, \theta_{n}) \mod \Z$ taking $G_{W}^{\max}$ to $(\Q / \Z)^{n}$. Hence we will often write $G_{W}^{\max} = \{ g \in (\Q/\Z)^{n} \mid Ag \in \Z^{m} \}$, where $A$ is the $m \times n$ exponent matrix of $W$.

\begin{definition}
In this notation, $G_{W}^{\max}$ is a subgroup of $(\Q/\Z)^{n}$ with respect to coordinate-wise addition. For $g \in G_{W}^{\max}$, we can write $g$ uniquely as $(g_{1}, \dots, g_{n})$, where each $g_{i}$ is a rational number in the interval [0,1). The $g_{i}$ are called the \textit{phases} of $g$.
\end{definition}

That being said, as a matter of convenience we will often use equivalent representatives of the $g_{i}$ that lie outside the interval [0,1) to write down group elements.

\subsection{Graded Frobenius Algebras}

Landau-Ginzburg A- and B-models are algebraic objects that are endowed with many levels of structure. In this paper, we will chiefly be concerned with their structure up to the level of graded Frobenius algebras. We will only develop the theory needed for this paper. We refer the interested reader to \cite{FJR07} for more details on the construction of the A-model. \cite{FJJS11}, \cite{Kra09}, and \cite{Tay13} also contain more information on constructing A- and B-models, and related isomorphisms. 

\begin{definition}
A \emph{graded Frobenius algebra}  is a graded algebra $A$ with a \emph{pairing} $\langle \cdot, \cdot \rangle: A \times A \rightarrow \C$ that is 

$\bullet$ Symmetric:  $\langle x, y \rangle = \langle y, x \rangle$ for all $x,y \in A$,

$\bullet$ Linear:  $\langle \alpha x + \beta y, z \rangle = \alpha \langle x, z \rangle + \beta \langle y, z \rangle$ for all $x,y,z \in A$ and $\alpha, \beta \in \C$,

$\bullet$ Nondegenerate:  for every $x \in A$ there exists $y \in A$ such that $\langle x, y \rangle \ne 0$. 

\noindent The pairing further satisfies the \textit{Frobenius property}, meaning that $\langle x \cdot y, z \rangle = \langle x, y\cdot z \rangle$ for all $x,y,z \in A$.
\end{definition}

\subsection{Unorbifolded B-Models}

\begin{definition}
For any polynomial $W$, the algebra $\mathcal{Q}_{W} = \C[x_{1},\dots,x_{n}] / (\frac{\partial W}{\partial x_{1}}, \dots, \frac{\partial W}{\partial x_{n}})$ is called the \textit{Milnor ring} (or \textit{local algebra}) of $W$.
\end{definition}

We note that $\mathcal{Q}_{W}$ has a vector space structure with a basis consisting of monomials.

\begin{theorem}[Theorem 2.6 of \cite{Tay13}]
If $W$ is admissible, then $\mathcal{Q}_{W}$ is finite dimensional.
\end{theorem}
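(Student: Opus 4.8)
The statement is a classical fact from singularity theory, and I would prove it by showing that the Jacobian ideal is primary to the irrelevant maximal ideal. Write $I = (\partial W/\partial x_{1}, \ldots, \partial W/\partial x_{n})$, so that $\mathcal{Q}_{W} = \C[x_{1},\ldots,x_{n}]/I$, and let $\mathfrak{m} = (x_{1},\ldots,x_{n})$. The plan is to establish that $\sqrt{I} = \mathfrak{m}$ and then deduce finite-dimensionality by a routine primary-ideal argument: some power $\mathfrak{m}^{N}$ lies in $I$, and $\C[x_{1},\ldots,x_{n}]/\mathfrak{m}^{N}$ is finite-dimensional.

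First I would show that the common zero locus $V(I) \subseteq \C^{n}$, which is exactly the critical locus of $W$, equals $\{0\}$. Nondegeneracy of $W$ says that the origin is an isolated point of $V(I)$. Quasihomogeneity gives the rest: differentiating the identity $W(t^{q_{1}}x_{1},\ldots,t^{q_{n}}x_{n}) = t\,W(x)$ with respect to $x_{i}$ yields $(\partial W/\partial x_{i})(t^{q_{1}}x_{1},\ldots,t^{q_{n}}x_{n}) = t^{\,1-q_{i}}(\partial W/\partial x_{i})(x)$, so if $p \in V(I)$ then $(t^{q_{1}}p_{1},\ldots,t^{q_{n}}p_{n}) \in V(I)$ for every real $t > 0$. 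Were some $p \in V(I)$ nonzero, this would trace a nonconstant curve in $V(I)$ converging to $0$ as $t \to 0^{+}$, contradicting that $0$ is isolated in $V(I)$. Hence $V(I) = \{0\}$.

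Next I would invoke Hilbert's Nullstellensatz: since $V(I) = \{0\}$, we get $\sqrt{I} = I(\{0\}) = \mathfrak{m}$. In particular each $x_{i}$ lies in $\sqrt{I}$, so $x_{i}^{k_{i}} \in I$ for some $k_{i} \ge 1$; taking $N = 1 + \sum_{i}(k_{i}-1)$, every monomial of total degree $N$ is divisible by some $x_{i}^{k_{i}}$, so $\mathfrak{m}^{N} \subseteq I$. Then $\C[x_{1},\ldots,x_{n}]/\mathfrak{m}^{N}$ is spanned as a $\C$-vector space by the finitely many monomials of total degree less than $N$, and $\mathcal{Q}_{W} = \C[x_{1},\ldots,x_{n}]/I$ is a quotient of it, so $\dim_{\C}\mathcal{Q}_{W} < \infty$.

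The only step demanding any care is the first one, namely that the critical locus is set-theoretically just the origin; this is precisely where quasihomogeneity (not merely nondegeneracy) enters, and one must phrase the scaling argument with positive real parameters (or clear denominators in the weights) to stay inside $\C^{n}$. Everything afterward is the standard Nullstellensatz computation. Alternatively, one could bypass the argument entirely by citing the classical equivalence, for quasihomogeneous polynomials, between having an isolated singularity at the origin and having finite Milnor number.
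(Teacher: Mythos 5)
Your argument is correct and complete. Note, however, that the paper itself offers no proof of this statement: it is imported verbatim as Theorem 2.6 of the cited thesis of Tay, so there is no internal argument to compare yours against. What you have written is the standard self-contained proof, and each step checks out: quasihomogeneity makes each partial derivative quasihomogeneous of degree $1-q_{i}$, so the critical locus is invariant under the positive-real scaling $t\mapsto(t^{q_{1}}p_{1},\dots,t^{q_{n}}p_{n})$, and since all $q_{i}>0$ every orbit limits to the origin, forcing $V(I)=\{0\}$ from the isolatedness hypothesis; the Nullstellensatz then gives $\sqrt{I}=\mathfrak{m}$, and your bound $N=1+\sum_{i}(k_{i}-1)$ correctly yields $\mathfrak{m}^{N}\subseteq I$ and hence $\dim_{\C}\mathcal{Q}_{W}<\infty$. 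You are also right to flag the one delicate point: isolatedness alone is a local statement, and it is precisely the scaling argument that rules out critical points far from the origin, which is where quasihomogeneity is indispensable. The only cosmetic remark is that admissibility gives you more than you use (uniqueness of weights and absence of cross terms play no role); nondegeneracy together with quasihomogeneity with positive weights suffices, which your proof implicitly demonstrates.
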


We will further think of the Milnor ring as a graded vector space over $\C$, by defining the degree of a monomial in $\mathcal{Q}_{W}$ to be $\deg(x_{1}^{a_{1}}x_{2}^{a_{2}}\dots x_{n}^{a_{n}}) = 2\sum_{i}^{n} a_{i}q_{i}$, where the $q_{i}$ are the quasihomogeneous weights of $W$. We have the following well-known results about the vector space structure of the Milnor ring (see Section 2.1 of \cite{Kra09}). First, $\dim(\mathcal{Q}_{W}) = \prod_{i = 1}^{n} \left( \frac{1}{q_{i}} - 1 \right)$. Second, the highest degree of its graded pieces is $2\sum_{i = 1}^{n} \left(1 - 2q_{i} \right)$. The number $\sum_{i = 1}^{n} \left(1 - 2q_{i} \right)$ is called the \emph{central charge}, and is denoted by $\widehat{c}$.

To make $\mathcal{Q}_{W}$ into a graded Frobenius algebra, we need to define a pairing function. 

\begin{definition}
For an admissible polynomial $W$, let $m,n \in \mathcal{Q}_{W}$. We define the \emph{pairing} $\langle m, n \rangle$ to be the complex number that satisfies
\begin{align*}
mn = \frac{\langle m, n \rangle}{\mu} \text{Hess}(W) + \text{ terms of degree less than} \deg(\text{Hess}(W)), 
\end{align*}
where $\mu$ is the dimension of $\mathcal{Q}_{W}$ as a vector space and Hess$(W)$ is the \emph{Hessian} of $W$---or the determinant of the matrix of second partial derivatives of $W$.  
\end{definition}

As noted by Krawitz \cite{Kra09}, we can represent Hess$(W)$ as a monomial in the Milnor ring. Further, the elements of highest degree in the Milnor ring form a one-dimensional subspace that is spanned by Hess$(W)$. 

The Milnor ring, together with the grading of the monomial basis and this pairing function, forms a graded Frobenius algebra. 

\begin{definition}
We define the \textit{unorbifolded} B-model $\B[W, \{\mathbf{0}\}]$ by $\B[W, \{\mathbf{0}\}] = \mathcal{Q}_{W}$.
\end{definition}

\subsection{Orbifolded B-Models}


Constructing orbifolded B-models for a general group $G$ has historically been a hard problem. Kaufmann did a lot of work in this area \cite{Ka1,Ka2, Ka3}, but in this paper we focus on the most important case for Landau-Ginzburg mirror symmetry:  the diagonal subgroup of SL$(n,\C)$. We follow the construction of Krawitz \cite{Kra09}, built on the work of Kaufmann.

\begin{definition}
Let $W \in \C[x_{1},\dots,x_{n}]$ be admissible, and let $g = (g_{1}, \dots, g_{n}) \in G_{W}^{\max}$. The \textit{fixed locus} of the group element $g$ is the set fix$(g) = \{x \in \C^{n} \mid g(x) = 0  \}$.
\end{definition}

The notation $W|_{\text{fix}(g)}$ denotes the restriction of the polynomial $W$ to the variables in the set fix$(g)$. We now state how $G$ acts on the Milnor ring.

\begin{definition}\label{grp_action}
Let $W$ be an admissible polynomial, and let $g \in G_{W}^{\max}$. We define the map $g^{*}: \mathcal{Q}_{W} \rightarrow \mathcal{Q}_{W}$ by $g^{*}(m) = \det(g) m \circ g$. (Here we think of $g$ as being a diagonal map with multiplicative coordinates). This is the \emph{group action} on the elements of $\mathcal{Q}_{W}$, sometimes denoted as $g \cdot m$. 
\end{definition}

\begin{definition}
Let $W$ be an admissible polynomial, and let $G \le G_{W}^{\max}$. The \emph{$G$-invariant subspace} of $\mathcal{Q}_{W}$ is defined to be $\mathcal{Q}_{W}^{G} = \{m \in \mathcal{Q}_{W} \mid g^{*}(m) = m \text{ for each } g \in G \}$.
\end{definition}

To construct an orbifolded B-model, we restrict $G$ to be a subgroup of $G_{W}^{\max} \cap \text{SL}(n, \C)$.

\begin{definition}
Let $W$ be an admissible polynomial, and $G \le G_{W}^{\max} \cap \text{SL}(n,\C)$ where $n$ is the number of variables of $W$. We define the underlying vector space of $\B[W,G]$ to be $\bigoplus\limits_{g \in G}\left( \mathcal{Q}_{W|_{\text{fix}(g)}} \right)^{G}$, where $( \cdot )^{G}$ denotes all the $G$-invariants. This is called the B-model \textit{state space}.
\end{definition}

The condition that $G \le G_{W}^{\max} \cap \text{SL}(n,\C)$ is required to construct the orbifolded B-model. We will often denote the group $G_{W}^{\max} \cap \text{SL}(n,\C)$ as SL$(W)$.

Note that if we let $G = \{\mathbf{0}\}$, then the formula yields the Milnor ring of $W$, as expected. We also note that the vector space basis of $\B[W,G]$ is made up of monomials from the basis of $\mathcal{Q}_{W|_{\text{fix}(g)}}$ for each $g \in G$. We denote these basis elements $\lfloor m; g \rceil$, where $g$ is a group element and $m$ is a monomial in $\left( \mathcal{Q}_{W|_{\text{fix}(g)}} \right)^{G}$. 

To make $\B[W,G]$ into a graded Frobenius algebra, we will define the grading, the multiplication and the pairing function. We'll start with the vector space grading.

\begin{definition}
Let $W$ be an admissible polynomial with weights $(q_{1}, \dots, q_{n})$. For a basis element $\lfloor m; (g_{1}, \dots, g_{n}) \rceil$ in the vector space basis for $\B[W, G]$, we define its \emph{degree} to be
\begin{align*}
2p + \sum_{g_{i} \notin \Z} (1 - 2q_{i}),
\end{align*}
where $p$ is the weighted degree of $m$. That is, if $m = x_{1}^{a_{1}} \cdots x_{n}^{a_{n}}$, then $p = \sum_{i = 1}^{n} a_{i}q_{i}$.
\end{definition}

The definition of B-model multiplication is due to Krawitz \cite{Kra09}, Kaufmann \cite{Ka1,Ka2,Ka3}, and Intriligator-Vafa \cite{IV}.

\begin{definition}\label{bmodelmultdef}
The product of two elements $\lfloor m; g\rceil$ and $\lfloor n ; h \rceil$ is given by
\begin{align*}
\lfloor m; g \rceil \star \lfloor n; h \rceil = \left\{\begin{array}{ll} \lfloor \gamma nm; g + h \rceil & \text{if fix}(g) \cup \text{ fix}(h) \cup \text{ fix}(g + h) = \C^{n} \\ 0 & \text{otherwise}   \end{array} \right. 
\end{align*}
where $\gamma$ is a monomial defined as
\begin{align*}
\gamma = \frac{\mu_{g \cap h}\text{Hess}(W|_{\text{fix}(g+h)} )}{\mu_{g + h}\text{Hess}(W|_{\text{fix}(g) \cap \text{fix}(h)}) }.
\end{align*}
Here $\mu_{g \cap h}$ is the dimension of the Milnor ring of $W|_{\text{fix}(g) \cap \text{fix}(h)}$, and $\mu_{g + h}$ is the dimension of the Milnor ring of $W|_{\text{fix}(g +h)}$.
\end{definition}

We note that Krawitz proved this multiplication to be associative in the case that $W$ is an invertible polynomial (see Proposition 2.1 of \cite{Kra09}). We believe this to also always be associative when $W$ is noninvertible polynomial, but it has never been proven in general. 

Finally, we have the pairing function.

\begin{definition}
Let $\lfloor m; g \rceil$ and $\lfloor n; h \rceil$ be two basis elements of $\B[W, G]$. If $g = -h$, then $\mathcal{Q}_{W|_{\text{fix}(g)}}$ is canonically isomorphic to $\mathcal{Q}_{W|_{\text{fix}(h)}}$. Therefore, we can define the \emph{pairing} on $B[W,G]$ as follows:
\begin{align*}
\langle \lfloor m; g \rceil, \lfloor n; h \rceil \rangle = \left\{ \begin{array}{ll} \langle m , n \rangle_{\mathcal{Q}_{W|_{\text{fix}(g)}}} & \text{if } g = - h, \\ 0 & \text{otherwise.}    \end{array}   \right.
\end{align*}
\end{definition}

One can verify that the orbifolded B-model $\B[W,G]$, as it has been defined, is a graded Frobenius algebra.

\subsection{Isomorphisms of Graded Frobenius Algebras}

We will focus on studying isomorphisms between Landau-Ginzburg B-models. The following are some common results about isomorphisms between unorbifolded B-models. We will refer back to these later on in the paper. Note that we consider two polynomials to be equivalent if they define the same singularity at the origin. That is, we say that $f \sim g$ if there exists a diffeomorphism $h: \C^{n} \rightarrow \C^{n}$ such that $f = g \circ h$. 

\begin{theorem}[Theorem 2.2.8 of \cite{Suggs}]
If $W_{1}$ and $W_{2}$ are quasihomogeneous functions fixing the origin, then $W_{1}$ and $W_{2}$ are equivalent if and only if their Milnor rings are isomorphic. 
\end{theorem}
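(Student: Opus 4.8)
The plan is to prove the two implications separately. That right-equivalent polynomials have isomorphic Milnor rings is essentially a formal consequence of the chain rule; the converse is the substantive half, and I would deduce it from the Mather--Yau theorem together with the fact that quasihomogeneity forces the Milnor ring to record the analytic type of the singularity.

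For the forward implication, suppose $h\colon(\C^{n},0)\to(\C^{n},0)$ is a diffeomorphism germ with $W_{1}=W_{2}\circ h$. Differentiating gives $\partial W_{1}/\partial x_{i}=\sum_{j}\bigl((\partial W_{2}/\partial y_{j})\circ h\bigr)\,\partial h_{j}/\partial x_{i}$, and since the Jacobian matrix of $h$ is invertible near the origin, the Jacobian ideal of $W_{1}$ coincides with the ideal generated by the functions $(\partial W_{2}/\partial y_{j})\circ h$. Hence the pullback $h^{*}$ carries the Jacobian ideal of $W_{2}$ onto that of $W_{1}$ and descends to a $\C$-algebra isomorphism $\mathcal{Q}_{W_{2}}\to\mathcal{Q}_{W_{1}}$. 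To upgrade this to an isomorphism of graded Frobenius algebras one uses that the two quasihomogeneous singularities, being right equivalent, share a weight system, so $h$ may be taken quasihomogeneous of weighted degree zero; it then respects the monomial grading, and it scales the Hessian---which spans the one-dimensional top-degree piece---by the nonzero constant $\det Dh(0)$, so after an overall rescaling the pairing is preserved as well.

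For the converse, suppose $\mathcal{Q}_{W_{1}}\isom\mathcal{Q}_{W_{2}}$ as $\C$-algebras. Quasihomogeneity enters through the Euler relation $\sum_{k}q_{k}x_{k}\,\partial W/\partial x_{k}=W$, which shows that each $W_{i}$ lies in its own Jacobian ideal; hence $\mathcal{Q}_{W_{i}}$ is equal to the moduli (Tjurina) algebra $\C[x]/\bigl(W_{i},\partial W_{i}/\partial x_{1},\dots,\partial W_{i}/\partial x_{n}\bigr)$ of the hypersurface germ $\bigl(W_{i}^{-1}(0),0\bigr)$. By the Mather--Yau theorem, an isomorphism of moduli algebras of isolated hypersurface singularities implies that the germs $\bigl(W_{1}^{-1}(0),0\bigr)$ and $\bigl(W_{2}^{-1}(0),0\bigr)$ are analytically isomorphic, i.e.\ that $W_{1}$ and $W_{2}$ are contact equivalent: $W_{1}=u\cdot(W_{2}\circ h)$ for a unit $u$ and a diffeomorphism $h$. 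It remains to absorb the unit, which is where quasihomogeneity is used a second time: a germ contact equivalent to a quasihomogeneous one is already right equivalent to it (a classical theorem of K. Saito), so there is a diffeomorphism $\tilde h$ with $W_{1}=W_{2}\circ\tilde h$, and $W_{1}\sim W_{2}$.

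The main obstacle is this converse direction, and within it the appeal to the Mather--Yau theorem, which supplies the genuinely deep input---its own proof rests on finite determinacy of isolated singularities and an integration-of-vector-fields homotopy argument---while the passage from contact equivalence to right equivalence for quasihomogeneous germs is a second nontrivial ingredient. A self-contained proof would instead have to reconstruct the singularity directly from the graded Gorenstein algebra $\mathcal{Q}_{W_{i}}$, whose socle is spanned by $\mathrm{Hess}(W_{i})$; this does not appear to be any easier, so I would simply invoke the cited results.
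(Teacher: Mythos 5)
The paper does not prove this statement at all---it is imported verbatim as Theorem 2.2.8 of the cited thesis of Suggs, so there is no internal proof to compare against. Your argument is the standard one and is essentially correct: the forward direction is the chain-rule computation showing $h^{*}$ carries the Jacobian ideal of $W_{2}$ onto that of $W_{1}$, and the converse is Euler's relation (so that the Milnor algebra coincides with the Tjurina algebra of the hypersurface germ), followed by Mather--Yau to get contact equivalence and the quasihomogeneous ``$\mu=\tau$'' rigidity to absorb the unit and upgrade to right equivalence. Two small points deserve a sentence each in a careful write-up. First, the Milnor ring in this paper is $\C[x_{1},\dots,x_{n}]/J(W)$ while the diffeomorphism germ $h$ acts on the analytic local ring; you should note that for a quasihomogeneous polynomial with isolated critical point the origin is the only critical point and $J(W)$ contains a power of the maximal ideal, so the polynomial and analytic quotients agree and the algebra isomorphism descends correctly. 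Second, your forward-direction remark that $h$ ``may be taken quasihomogeneous of weighted degree zero'' is itself a nontrivial theorem; it is not needed for the statement as given (an algebra isomorphism of Milnor rings suffices for the ``only if'' direction, and is all the ``if'' direction requires as hypothesis), so you could either drop it or cite it explicitly rather than assert it in passing. Neither point is a gap in the logic, and invoking Mather--Yau and Saito rather than reproving them is entirely reasonable here.
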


\begin{theorem}[Theorem 5.1.1 of \cite{Suggs}]\label{isommilnorringweights}
If two nondegenerate quasihomogeneous polynomials are equivalent, they have the same unordered set of weights. 
\end{theorem}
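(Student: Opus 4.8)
The plan is to reduce the statement to a purely combinatorial fact about the Poincar\'e polynomial of the Milnor ring. By Theorem 2.2.8 of \cite{Suggs}, the hypothesis that $W_{1}\sim W_{2}$ produces an isomorphism $\mathcal{Q}_{W_{1}}\isom\mathcal{Q}_{W_{2}}$ of graded Frobenius algebras; in particular the two Milnor rings have the same Poincar\'e polynomial $P_{W}(t)=\sum_{k}\dim(\mathcal{Q}_{W})_{k}\,t^{k}$. Since $W$ is nondegenerate, $\mathcal{Q}_{W}$ is finite dimensional, so the partial derivatives $\partial W/\partial x_{1},\dots,\partial W/\partial x_{n}$ form a homogeneous system of parameters, hence a regular sequence, in $\C[x_{1},\dots,x_{n}]$; thus $\mathcal{Q}_{W}$ is a graded complete intersection and
\[
P_{W}(t)=\prod_{i=1}^{n}\frac{1-t^{\,1-q_{i}}}{1-t^{\,q_{i}}},
\]
with the normalization $\deg x_{i}=q_{i}$ (the paper's convention only differs by $t\mapsto t^{2}$). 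Note $0<q_{i}\le \tfrac12$, since each factor $1/q_{i}-1$ of $\dim\mathcal{Q}_{W}$ is at least $1$; weights equal to $\tfrac12$ contribute the trivial factor $1$, so $P_{W}$ is really the product over the $i$ with $q_{i}<\tfrac12$. It therefore suffices to show that $P_{W}(t)$, together with $n$, determines the multiset $\{q_{1},\dots,q_{n}\}$.

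For the extraction I would isolate this lemma: if $r_{1},\dots,r_{p}$ and $s_{1},\dots,s_{q}$ are positive rationals with $\prod_{i}(1-t^{\,r_{i}})=\prod_{j}(1-t^{\,s_{j}})$ as functions, then $\{r_{i}\}=\{s_{j}\}$ as multisets. To prove it, pick a common denominator $N$ and substitute $u=t^{1/N}$, turning the identity into $\prod_{i}(u^{Nr_{i}}-1)=\pm\prod_{j}(u^{Ns_{j}}-1)$; the multiplicity of a primitive $d$-th root of unity as a zero of $\prod_{i}(u^{e_{i}}-1)$ is $\#\{i:d\mid e_{i}\}$, and equating these counts for all $d$ and applying M\"obius inversion over the divisibility lattice recovers the multiset of exponents, hence $\{r_{i}\}$. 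Now suppose $V$ is another nondegenerate quasihomogeneous polynomial with weights $\{q'_{j}\}$ and $P_{V}=P_{W}$. Cross-multiplying the equality of the two product formulas (over the weights that are $<\tfrac12$) gives $\prod(1-t^{\,1-q_{i}})\prod(1-t^{\,q'_{j}})=\prod(1-t^{\,1-q'_{j}})\prod(1-t^{\,q_{i}})$, so the lemma yields $\{1-q_{i}\}\cup\{q'_{j}\}=\{1-q'_{j}\}\cup\{q_{i}\}$ as multisets. Every weight that is $<\tfrac12$ lies in $(0,\tfrac12)$ while every $1-q_{i}$ lies in $(\tfrac12,1)$, so comparing the sub-multisets of elements below $\tfrac12$ on the two sides gives $\{q_{i}:q_{i}<\tfrac12\}=\{q'_{j}:q'_{j}<\tfrac12\}$.

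Finally, the weights equal to $\tfrac12$ are invisible in $P_{W}$, and here the integer $n$ is needed: the number of weights equal to $\tfrac12$ is $n$ minus the number of weights strictly below $\tfrac12$, and the latter multiset has just been recovered. Since right-equivalent polynomials live on the same $\C^{n}$, this pins down the full multiset $\{q_{1},\dots,q_{n}\}$, which, combined with the first paragraph, proves that $W_{1}$ and $W_{2}$ have the same weights with multiplicities.

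I expect the extraction step to be the main obstacle, and within it the bookkeeping of ``accidental'' cancellations: the invisible $\tfrac12$-weights, and the cancellations that occur between a numerator factor $1-t^{\,1-q_{i}}$ and a denominator factor $1-t^{\,q_{j}}$ once denominators are cleared (e.g. when $q_{j}$ divides $1-q_{i}$ in the scaled exponents). Phrasing everything through the root-of-unity multiplicity/M\"obius count is what makes the argument robust, since it tracks multiplicities globally rather than factor by factor. A secondary point requiring care is the appeal to Theorem 2.2.8: one should check, or cite, that for quasihomogeneous $W$ the grading on $\mathcal{Q}_{W}$ is intrinsic, so that an algebra isomorphism of Milnor rings is automatically graded and $P_{W}$ is genuinely an invariant of the right-equivalence class.
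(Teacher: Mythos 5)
The paper does not actually prove this statement---it imports it verbatim as Theorem 5.1.1 of \cite{Suggs}---so there is no in-paper argument to compare against; I can only assess your proposal on its own terms. The arithmetic core of your argument is sound: the Koszul-resolution formula $P_{W}(t)=\prod_{i}(1-t^{1-q_{i}})/(1-t^{q_{i}})$ is correct for a quasihomogeneous polynomial with isolated singularity (the partials are a homogeneous system of parameters, hence a regular sequence), and your extraction lemma is right --- after clearing denominators, counting the multiplicity of each primitive $d$-th root of unity and running downward induction on the divisibility poset does recover the multiset of exponents, and the ``elements below $\tfrac12$'' comparison after cross-multiplying cleanly separates $\{q_{i}\}$ from $\{1-q_{i}\}$. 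The handling of the invisible weights $q_{i}=\tfrac12$ via the fixed ambient dimension $n$ is also correct. (One small circularity: you justify $q_{i}\le\tfrac12$ by saying each factor $1/q_{i}-1$ of $\mu$ is at least $1$, which is just a restatement; the actual reason is that nondegeneracy forces each variable to occur in a monomial $x_{i}^{a}$ or $x_{i}^{a}x_{j}$ with $a\ge 2$, whence $aq_{i}\le 1$.)

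The genuine gap is the step you yourself flag as ``secondary'': that equivalence of $W_{1}$ and $W_{2}$ forces $P_{W_{1}}=P_{W_{2}}$. Equivalence is $W_{1}=W_{2}\circ h$ for a diffeomorphism germ $h$, and the induced isomorphism $\mathcal{Q}_{W_{2}}\to\mathcal{Q}_{W_{1}}$, $[p]\mapsto[p\circ h]$, is a priori only an isomorphism of local $\C$-algebras; it has no reason to respect the weight gradings, which are defined using the very weights whose equality is to be proved. Asserting that ``the grading on $\mathcal{Q}_{W}$ is intrinsic'' is not a routine verification --- it is essentially equivalent to the theorem itself (the weights are exactly the degrees occurring in $\mathfrak{m}/\mathfrak{m}^{2}$ for the weight grading), and closing it requires a real input such as K.~Saito's uniqueness theorem for quasihomogeneous structures, or the Milnor--Orlik computation showing the characteristic polynomial of the monodromy (a genuine invariant of the equivalence class) is given by the analogous product over the weights, to which your extraction lemma would then apply. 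So the combinatorial reduction is the easy half and is done correctly; the invariance statement that feeds it is the substantive half and is left as an unproved citation.
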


\begin{theorem}[Webb's Theorem, Theorem 5.1.3 of \cite{Suggs}]\label{webbs_thm}
Let $W_{1}$ and $W_{2}$ be nondegenerate quasihomogeneous polynomials with the same (ordered) weights. If no elements in $\mathcal{Q}_{W_{1}}$ have weighted degree 1, then $W_{1}$ and $W_{2}$ are equivalent. 
\end{theorem}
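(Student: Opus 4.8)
The plan is to prove Webb's Theorem by the \emph{homotopy method} (Thom--Levine): I will connect $W_{1}$ to $W_{2}$ through a family of nondegenerate quasihomogeneous polynomials of the common weight vector $(q_{1},\dots,q_{n})$ and manufacture, at each parameter value, a coordinate change that keeps the polynomial constant up to composition. Write $V$ for the finite-dimensional complex vector space of polynomials that are quasihomogeneous of weighted degree $1$ for these weights; both $W_{1}$ and $W_{2}$ lie in $V$. Nondegeneracy (an isolated critical point at the origin) is equivalent to finite-dimensionality of the Milnor ring, which is a Zariski-open condition, so the degenerate polynomials form a proper Zariski-closed subset $\Delta \subset V$ and its complement is path connected in the analytic topology. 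First I would choose a smooth (indeed real-analytic) path $s \mapsto W^{(s)}$, $s \in [0,1]$, lying entirely in $V \setminus \Delta$ with $W^{(0)} = W_{1}$ and $W^{(1)} = W_{2}$, so that every $W^{(s)}$ is nondegenerate and quasihomogeneous with the given weights.

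The key algebraic input is that the hypothesis on degree-$1$ elements transfers from $W_{1}$ to every member of the path. For an isolated quasihomogeneous singularity the partials $\partial W/\partial x_{i}$ have the origin as their only common zero, hence form a regular sequence, so $\mathcal{Q}_{W}$ is a graded complete intersection and the dimension of each graded piece is read off from the Poincar\'e series
\begin{align*}
\prod_{i=1}^{n} \frac{1 - t^{\,1 - q_{i}}}{1 - t^{\,q_{i}}},
\end{align*}
which depends only on the weights (and which recovers the total dimension $\prod_{i}\left(\tfrac{1}{q_{i}} - 1\right)$ already recorded above in the limit $t \to 1$). Hence ``no elements of weighted degree $1$'' is a condition on the weights alone: since it holds for $\mathcal{Q}_{W_{1}}$, it holds for $\mathcal{Q}_{W^{(s)}}$ at every $s$. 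Vanishing of the weighted-degree-$1$ piece of $\mathcal{Q}_{W^{(s)}}$ says precisely that the degree-$1$ part of the Jacobian ideal $J_{W^{(s)}} = \left(\partial W^{(s)}/\partial x_{1}, \dots, \partial W^{(s)}/\partial x_{n}\right)$ is all of $V$. In particular the velocity $\dot W^{(s)} := \tfrac{d}{ds} W^{(s)} \in V$ lies in $J_{W^{(s)}}$ for every $s$.

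Thus for each $s$ I can write $\dot W^{(s)} = \sum_{i} a_{i}(x,s)\, \partial W^{(s)}/\partial x_{i}$ with each $a_{i}(\cdot,s)$ quasihomogeneous of weighted degree $q_{i}$. Because the evaluation map $(a_{1},\dots,a_{n}) \mapsto \sum_{i} a_{i}\,\partial W^{(s)}/\partial x_{i}$ between fixed finite-dimensional graded spaces is surjective for all $s$ and varies smoothly in $s$, a smoothly varying right inverse (for instance the Moore--Penrose pseudo-inverse) yields coefficients $a_{i}(x,s)$ that depend smoothly on $s$ and holomorphically on $x$. Setting $v_{s} = -\sum_{i} a_{i}(x,s)\, \partial_{x_{i}}$ gives a time-dependent holomorphic vector field; since each $a_{i}$ is quasihomogeneous of positive weighted degree $q_{i}$ it vanishes at the origin, so $v_{s}(0) = 0$. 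Integrating $v_{s}$ on a neighborhood of the origin yields biholomorphic germs $\phi_{s}$ with $\phi_{0} = \mathrm{id}$ and $\phi_{s}(0) = 0$, and the homotopy identity $\tfrac{d}{ds}\!\left(W^{(s)} \circ \phi_{s}\right) = \left(\dot W^{(s)} + \sum_{i} \partial_{x_{i}} W^{(s)}\, v_{s}^{i}\right)\circ \phi_{s} = 0$ forces $W^{(s)} \circ \phi_{s} \equiv W_{1}$. Evaluating at $s = 1$ gives $W_{2} \circ \phi_{1} = W_{1}$, so $h = \phi_{1}$ witnesses $W_{1} \sim W_{2}$.

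I expect the main obstacle to be the uniform solvability step: guaranteeing $\dot W^{(s)} \in J_{W^{(s)}}$ for \emph{every} $s$ along the path, and doing so with coefficients $a_{i}(x,s)$ depending smoothly on $s$ so that the vector field integrates to an honest family of biholomorphisms. This is exactly where the degree-$1$ hypothesis is indispensable and where the weight-only nature of the graded Milnor dimensions does the real work. The secondary technical care is in selecting the path inside $V \setminus \Delta$ via connectivity of the nondegenerate locus, and in observing that the germ of biholomorphism produced is precisely what the paper's definition of singularity equivalence requires.
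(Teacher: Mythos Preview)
The paper does not contain a proof of this theorem; it is quoted as Theorem 5.1.3 of \cite{Suggs} and stated without proof among the imported preliminary results in Section~2.6. There is therefore nothing in the present paper to compare your argument against.

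That said, your approach---the Thom--Levine homotopy method---is the standard and correct route to this result, and your write-up handles the essential points. The two ingredients are exactly as you identify: (i) the graded Poincar\'e series $\prod_{i}(1-t^{1-q_{i}})/(1-t^{q_{i}})$ of the Milnor ring of a nondegenerate quasihomogeneous polynomial depends only on the weight vector, so the vanishing of the weighted-degree-$1$ piece propagates from $W_{1}$ to every $W^{(s)}$ along a path in the nondegenerate locus; and (ii) this vanishing says precisely that every weighted-degree-$1$ polynomial, in particular $\dot W^{(s)}$, lies in the Jacobian ideal with coefficients $a_{i}$ of weighted degree $q_{i}>0$, so the homotopy equation can be solved and the resulting origin-fixing time-dependent vector field integrated. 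Your treatment of smooth dependence in $s$ (a smooth right inverse of a surjective family of linear maps between fixed finite-dimensional spaces) and of the existence of the flow near the origin are both sound. The only cosmetic point worth tightening is to say explicitly that $\Delta$ is \emph{proper} because $W_{1}\in V\setminus\Delta$, and to note that the germ of biholomorphism you produce is what the paper's notion of equivalence (stated just before \thmref{isommilnorringweights}) actually requires, despite the global-sounding phrasing there.
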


These are all results about B-model isomorphisms using the trivial group $\{\mathbf{0}\}$. The following is a result includes orbifolded B-models.

\begin{proposition}[Proposition 2.3.2 of \cite{FJJS11}]\label{bmodtensorprod} Suppose $W_{1}$ and $W_{2}$ are nondegenerate, quasihomogeneous polynomials with no variables in common. If $G_{1} \le \text{SL}(W_{1})$ and $G_{2} \le \text{SL}(W_{2})$, then $G_{1} \times G_{2}$ is contained in $\text{SL}(W_{1} + W_{2})$, $G_{1} \times G_{2}$ fixes $W_{1} + W_{2}$, and we have an isomorphism
\begin{align*}
\B[W_{1},G_{1}] \otimes \B[W_{2},G_{2}] \isom \B[W_{1} + W_{2}, G_{1} \times G_{2}].
\end{align*}
\end{proposition}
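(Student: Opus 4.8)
\emph{Proof plan.} My plan is to dispatch the two structural assertions first and then construct an explicit isomorphism on monomial bases. Since $W_1$ and $W_2$ involve disjoint variables, the exponent matrix of $W_1+W_2$ is block diagonal with blocks the exponent matrices $A_1,A_2$, so reading off the description $G_W^{\max}=\{g: Ag\in\Z^m\}$ gives $G_{W_1+W_2}^{\max}=G_{W_1}^{\max}\times G_{W_2}^{\max}$; hence $G_1\times G_2$ is a subgroup of $G_{W_1+W_2}^{\max}$ and in particular fixes $W_1+W_2$ (equivalently, $(W_1+W_2)(g_1x,g_2y)=W_1(x)+W_2(y)$ directly). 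The diagonal action of $(g_1,g_2)$ on $\C^{n_1+n_2}$ is the direct sum of those of $g_1$ and $g_2$, so its determinant is $\det(g_1)\det(g_2)$, which equals $1$ whenever $g_1\in\mathrm{SL}(W_1)$ and $g_2\in\mathrm{SL}(W_2)$; therefore $G_1\times G_2\le\mathrm{SL}(W_1+W_2)$ and the right-hand orbifolded B-model is defined.

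For the isomorphism I would first build the underlying graded vector space map. For $(g_1,g_2)\in G_1\times G_2$ one has $\mathrm{fix}(g_1,g_2)=\mathrm{fix}(g_1)\times\mathrm{fix}(g_2)$, so $(W_1+W_2)|_{\mathrm{fix}(g_1,g_2)}=W_1|_{\mathrm{fix}(g_1)}+W_2|_{\mathrm{fix}(g_2)}$ is again a sum in disjoint variables, whose Jacobian ideal is generated by the two separate sets of partials; hence its Milnor ring is $\mathcal{Q}_{W_1|_{\mathrm{fix}(g_1)}}\otimes\mathcal{Q}_{W_2|_{\mathrm{fix}(g_2)}}$, with the monomial $m_1m_2$ corresponding to $m_1\otimes m_2$. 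On this tensor product $(h_1,h_2)\in G_1\times G_2$ acts coordinatewise, and since the normalizing factor in Definition~\ref{grp_action} (a product of phases, i.e.\ a determinant) is multiplicative over the block decomposition, this action is $h_1^{*}\otimes h_2^{*}$; in particular $(h_1,\mathbf{0})$ acts only on the first factor, by the action defining $(\mathcal{Q}_{W_1|_{\mathrm{fix}(g_1)}})^{G_1}$, and $(\mathbf{0},h_2)$ only on the second. As the invariants of a tensor product of two $\C$-representations on which the two groups act separately form the tensor product of the invariants, $(\mathcal{Q}_{(W_1+W_2)|_{\mathrm{fix}(g_1,g_2)}})^{G_1\times G_2}=(\mathcal{Q}_{W_1|_{\mathrm{fix}(g_1)}})^{G_1}\otimes(\mathcal{Q}_{W_2|_{\mathrm{fix}(g_2)}})^{G_2}$. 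Summing over $(g_1,g_2)$ and distributing $\bigoplus$ over $\otimes$ yields a graded linear isomorphism
\begin{align*}
\psi:\ \B[W_1,G_1]\otimes\B[W_2,G_2]\ \xrightarrow{\ \sim\ }\ \B[W_1+W_2,G_1\times G_2],\qquad \lfloor m_1;g_1\rceil\otimes\lfloor m_2;g_2\rceil\ \longmapsto\ \lfloor m_1m_2;(g_1,g_2)\rceil .
\end{align*}
It respects the grading because the weights of $W_1+W_2$ are the concatenation of those of $W_1$ and $W_2$, so the weighted degree of $m_1m_2$ is additive, and the set of non-integral phases of $(g_1,g_2)$ is the disjoint union of those of $g_1$ and $g_2$.

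It remains to check that $\psi$ is multiplicative and an isometry for the pairings, which is where the real content lies. The non-vanishing condition of Definition~\ref{bmodelmultdef} for $\lfloor m_1m_2;(g_1,g_2)\rceil\star\lfloor n_1n_2;(h_1,h_2)\rceil$ concerns which variables are fixed by $(g_1,g_2)$, $(h_1,h_2)$, and $(g_1+h_1,g_2+h_2)$; since a $W_1$-variable is fixed by $(g_1,g_2)$ exactly when it is fixed by $g_1$ (and likewise for $W_2$-variables and $g_2$), this condition decouples into the two separate conditions for the products in $\B[W_1,G_1]$ and $\B[W_2,G_2]$, so the product vanishes on one side if and only if it vanishes on the other. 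When nonzero, $\mathrm{fix}(g_1+h_1,g_2+h_2)$ and $\mathrm{fix}(g_1,g_2)\cap\mathrm{fix}(h_1,h_2)$ again split as products, so the polynomials entering the structure monomial $\gamma$ are sums in disjoint variables; their Hessians then factor as products of the corresponding $W_1$- and $W_2$-Hessians (the Hessian matrix is block diagonal) and their Milnor-ring dimensions factor as products, giving $\gamma=\gamma_1\gamma_2$, and together with $n_1n_2m_1m_2=(n_1m_1)(n_2m_2)$ this is precisely $\psi$ applied to the tensor-product multiplication. The pairing is handled the same way: $\langle\lfloor m_1m_2;(g_1,g_2)\rceil,\lfloor n_1n_2;(h_1,h_2)\rceil\rangle$ is nonzero only when $g_1=-h_1$ and $g_2=-h_2$, and is then the Milnor-ring pairing on $\mathcal{Q}_{W_1|_{\mathrm{fix}(g_1)}}\otimes\mathcal{Q}_{W_2|_{\mathrm{fix}(g_2)}}$, which factors as $\langle m_1,n_1\rangle\langle m_2,n_2\rangle$ by applying the same block-Hessian and multiplicative-dimension observation to the defining equation of the Milnor-ring pairing.

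I expect the only genuine obstacle to be bookkeeping: keeping the zero/nonzero cases of $\star$ and of the pairing synchronized across $\psi$, and carefully verifying the multiplicativity of the Hessian polynomials and of the Milnor-ring dimensions that appear in $\gamma$. The representation-theoretic fact that the invariants of a tensor product of representations of $G_1\times G_2$ (with each factor acting separately) decompose as the tensor product of invariants is standard for finite groups over $\C$ and should simply be invoked rather than reproved.
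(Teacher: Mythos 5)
The paper does not prove this proposition: it is imported verbatim as Proposition 2.3.2 of \cite{FJJS11}, so there is no in-paper argument to compare against. Your proof is correct and is essentially the standard argument (and the one in \cite{FJJS11}): block-diagonal exponent matrix and multiplicativity of determinants for the group-theoretic claims; splitting of fixed loci, Milnor rings, invariants, Hessians, and Milnor numbers over the two disjoint variable blocks for the state space, grading, product, and pairing. The one place that needs the care you gave it is the non-vanishing condition $\text{fix}(g)\cup\text{fix}(h)\cup\text{fix}(g+h)=\C^{n}$, which must be read coordinate-wise (each variable fixed by at least one of $g$, $h$, $g+h$) rather than as a literal union of subspaces; under that reading it decouples exactly as you say, so the zero and nonzero cases of $\star$ match on both sides. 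No gaps.
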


Note that \thmref{webbs_thm} is a type of Group-Weights result on the B-side. However, Group-Weights does not hold in general for B-models as the next example demonstrates. 

\begin{example}[Example 5.1.4 of \cite{Suggs}] \label{bmodelcondefcounterex} Let $W_{1} = x^{4} + y^{4}$ and $W_{2} = x^{3}y + xy^{3}$. Both polynomials have weights $\left(\frac{1}{4}, \frac{1}{4} \right)$. The set $\{1, y, y^{2}, x, xy, xy^{2}, x^{2}, x^{2}y, x^{2}y^{2} \}$ is a basis for both $\mathcal{Q}_{W_{1}}$ and $\mathcal{Q}_{W_{2}}$. One can verify that any ring homomorphism from $\mathcal{Q}_{W_{1}}$ to $\mathcal{Q}_{W_{2}}$ will not be surjective, so we see that $\B[W_{1}, \{\mathbf{0}\}] \not \isom \B[W_{2},\{\mathbf{0}\}]$. But notice that $x^{2}y^{2}$ has weighted degree 1. We see that any choice of basis for $\mathcal{Q}_{W_{1}}$ or $\mathcal{Q}_{W_{2}}$ will contain a monomial of weighted degree 1. Therefore this does not contradict Webb's Theorem. 
\end{example}

This shows that Group-Weights is not sufficient for B-model isomorphisms. This also shows that deformation invariance does not hold in general on the B-side, since there is no way to deform $x^{4} + y^{4}$ into $x^{3}y + xy^{3}$ while maintaining isomorphic Milnor rings.


\section{Isomorphism Extension Theorem}

Though the Group-Weights theorem does not hold in general for B-models, we still want to find instances where it does. So given equivalent singularities $W_{1}$, $W_{2}$ with a common group $G \le \text{SL}(n, \C)$ that fixes them both, we want to find cases when their corresponding B-models $\B[W_{1}, G]$ and $\B[W_{2}, G]$ are also isomorphic. We will need to impose a condition on our polynomials and groups, which condition in part stems from the following definition.


\begin{definition}[Property (*) of \cite{FJJS11}]\label{propertystar}
Let $W$ be a nondegenerate, invertible polynomial, and let $G$ be an admissible group of symmetries of $W$. The pair $(W,G)$ has \emph{Property (*)} if
\begin{enumerate}
\item $W$ can be decomposed as $W = \sum_{i = 1}^{M} W_{i}$, where the $W_{i}$ are themselves invertible polynomials having no variables in common with any other $W_{j}$. 

\item For any element $g$ of $G$ whose associated sector $\A_{g} \subseteq \A[W,G]$ is nonempty, and for each $i \in \{1, \dots, M\}$ the action of $g$ fixes either all of the variables in $W_{i}$ or none of them.

\item For any element $g'$ of $G^{T}$ whose associated sector of $\B_{g'} \subseteq \B[W^{T}, G^{T}]$ is nonempty, and for each $i \in \{1, \dots, M\}$ the action of $g'$ fixes either all of the variables in $W^{T}_{i}$ or none of them.

\end{enumerate}
\end{definition}

Here the \emph{sector} of an A- or B-model corresponding to a group element $g$ refers to the subset of the vector space basis containing the elements of the form $\lfloor m; g \rceil$.

Property (*) in \cite{FJJS11} is a generalization of the \emph{well behaved} condition for a polynomial/group pair $(W,G)$ given in \defref{wellbehaved}. We note that for the following polynomials, any possible choice of group (that fixes the polynomial and is contained in SL$(n, \C)$) will form a well-behaved pair: fermats, loops in any number of variables, and any admissible polynomial in two variables. We can further admit arbitrary sums of fermat and loop polynomials in distinct variables, together with any of their symmetry groups  (see Remark 1.1.1 of \cite{FJJS11}).










\begin{theorem}\label{isomextensionthm} Let $W_{1}$ and $W_{2}$ be admissible polynomials with $\phi: \mathcal{Q}_{W_{1}} \rightarrow \mathcal{Q}_{W_{2}}$ an equivariant isomorphism of graded Frobenius algebras, and let $G$ be a group that preserves both $W_{1}$ and $W_{2}$. If $(W_{1},G)$ and $(W_{2},G)$ are well behaved, then $\phi$ extends to an isomorphism $\psi: \B[W_{1}, G] \rightarrow \B[W_{2},G]$.
\end{theorem}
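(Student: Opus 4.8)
The plan is to build $\psi$ sector by sector using the well-behaved decomposition, so that on each sector $\psi$ reduces to a tensor product of copies of $\phi$ restricted to the appropriate sub-Milnor rings. First I would use Definition~\ref{wellbehaved} to write $W_1 = \sum_i W_1^{(i)}$ and $W_2 = \sum_i W_2^{(i)}$ in the respective (disjoint) variable blocks, with $G = \bigoplus_i G_i$; here I would need to check that the equivariant isomorphism $\phi$ respects this block decomposition, i.e. that $\phi$ carries $\mathcal{Q}_{W_1^{(i)}}$ onto $\mathcal{Q}_{W_2^{(i)}}$. Since $\mathcal{Q}_{W_j} \cong \bigotimes_i \mathcal{Q}_{W_j^{(i)}}$ as graded Frobenius algebras and the block pieces live in distinct sets of variables, a graded algebra map respecting the product and the pairing must permute/preserve the tensor factors, and equivariance pins down the matching (possibly after reindexing so that $W_1^{(i)}$ and $W_2^{(i)}$ share the same variables). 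Record $\phi = \bigotimes_i \phi_i$ with each $\phi_i : \mathcal{Q}_{W_1^{(i)}} \to \mathcal{Q}_{W_2^{(i)}}$ an equivariant graded Frobenius isomorphism.

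Next, for a fixed $g = (g_1,\dots,g_n) \in G$, the well-behaved hypothesis says each $g$ restricted to block $i$ either fixes all variables of that block or none. Let $S \subseteq \{1,\dots,M\}$ be the set of blocks where $g$ acts as the identity on the variables. Then $W_1|_{\mathrm{fix}(g)} = \sum_{i \in S} W_1^{(i)}$, similarly for $W_2$, and by Proposition~\ref{bmodtensorprod} (applied blockwise) the $g$-sector of $\B[W_1,G]$ is $\bigl(\bigotimes_{i\in S}\mathcal{Q}_{W_1^{(i)}}\bigr)^{G}$. I would then define $\psi$ on the $g$-sector by $\lfloor m; g \rceil \mapsto \lfloor (\bigotimes_{i\in S}\phi_i)(m); g \rceil$. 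One must check this lands in the $G$-invariants of the target: this is exactly where equivariance of each $\phi_i$ is used, together with the compatibility of the $G$-action with the block decomposition and with restriction to fixed loci (the action $g^* m = \det(g)\, m\circ g$ factors through the blocks). Because $\phi$ is graded and $\phi_i$ preserves the weighted degree of each block, and the sector-shift term $\sum_{g_i\notin\Z}(1-2q_i)$ depends only on $g$ (not on $m$), $\psi$ preserves the B-model grading. Doing this for every $g\in G$ assembles a graded vector space isomorphism $\psi : \B[W_1,G]\to\B[W_2,G]$ extending $\phi$ (the $g=\mathbf 0$ sector).

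It then remains to check that $\psi$ is an algebra homomorphism and preserves the pairing. For the pairing: if $g = -h$ the relevant Milnor rings are $W_j|_{\mathrm{fix}(g)}$ and the canonical identification with $W_j|_{\mathrm{fix}(h)}$ is blockwise, so $\langle \psi\lfloor m;g\rceil, \psi\lfloor n;h\rceil\rangle = \langle (\bigotimes\phi_i)(m),(\bigotimes\phi_i)(n)\rangle$, which equals $\langle m,n\rangle$ because each $\phi_i$ preserves the Milnor-ring pairing and the pairing on a tensor product is the product of the pairings (the Hessian of $\sum_{i\in S}W_j^{(i)}$ is the product of the block Hessians, and $\mu$ is multiplicative). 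For multiplication: given $\lfloor m;g\rceil\star\lfloor n;h\rceil$, the fixed-locus condition $\mathrm{fix}(g)\cup\mathrm{fix}(h)\cup\mathrm{fix}(g+h)=\C^n$ is a blockwise condition (on each block it is a condition among three diagonal symmetries of $W_j^{(i)}$, and by well-behavedness only finitely many block-configurations arise); crucially this condition and the correction monomial $\gamma$ depend only on the dimensions $\mu_{g\cap h}$, $\mu_{g+h}$ and the Hessians of the restricted polynomials, all of which are preserved under $\phi_i$ since $\phi_i$ is a graded Frobenius isomorphism of $\mathcal{Q}_{W_1^{(i)}}\cong \mathcal{Q}_{W_2^{(i)}}$ (so corresponding restrictions have equal dimensions and $\phi_i(\mathrm{Hess}) = \mathrm{Hess}$ up to the scalar already accounted for). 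Hence $\psi(\lfloor m;g\rceil\star\lfloor n;h\rceil) = \psi\lfloor m;g\rceil \star \psi\lfloor n;h\rceil$.

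The main obstacle I expect is not the multiplication identity itself but justifying cleanly that an equivariant graded Frobenius isomorphism $\phi$ of the full Milnor rings must decompose as a tensor product of block isomorphisms $\phi_i$ compatible with the $G$-action — in particular handling the case where $W_1$ and $W_2$ have blocks that are abstractly isomorphic but indexed differently, so that $\phi$ could a priori mix blocks of the same "shape." I would address this by using that each block of $\mathcal{Q}_{W_j}$ is generated in its lowest positive degrees by the variables of that block, that these variable subspaces are detected by the algebra structure (e.g. as a set of generators with prescribed relations coming from $\partial W_j^{(i)}$), and that $\phi$ being graded forces it to send lowest-degree generators to lowest-degree generators; combined with equivariance (which distinguishes blocks by the $G$-phases appearing on their variables) this pins the matching down. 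Once the block decomposition of $\phi$ is secured, the rest of the argument is the blockwise bookkeeping sketched above, leaning throughout on Proposition~\ref{bmodtensorprod} and Theorem~\ref{isommilnorringweights}.
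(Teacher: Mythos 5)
Your outline is sound, but it is essentially the strategy the paper reserves for the main result (\thmref{mainresult}, via \lemref{equivariantisoms} and \corref{isomextensioncorollary}) rather than the route taken for this theorem. The paper's own proof of \thmref{isomextensionthm} never decomposes $\phi$ into tensor factors: it sets $\psi(\lfloor p_{i}; h_{i}\rceil) = \lfloor \phi(p_{i}); h_{i}\rceil$ on a monomial basis, uses equivariance to see that $G$-invariants of each restricted Milnor ring map onto $G$-invariants, and reduces the entire multiplicativity check to the single identity $\phi(\gamma_{1}) = \gamma_{2}$, settled by a case analysis on whether $h_{i}$, $h_{j}$ are zero. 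In the only nontrivial case (both nonzero) well-behavedness forces the fixed loci to be trivial and $h_{i}+h_{j}=\mathbf{0}$, so $\gamma_{j} = \text{Hess}(W_{j})/\mu$, and everything rests on \lemref{hessianstohessians}: an isomorphism of unorbifolded B-models sends $\text{Hess}(W_{1})$ \emph{exactly} to $\text{Hess}(W_{2})$, proved by writing the Hessian as $\mu\, m_{1}m_{2}/\langle m_{1},m_{2}\rangle$ for a top-degree product and using preservation of the pairing. In your write-up this fact is buried in the parenthetical ``$\phi_{i}(\text{Hess}) = \text{Hess}$ up to the scalar already accounted for''; you should isolate and prove it, since the equality is exact (no scalar) and it is the real engine of the product computation. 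What your blockwise approach buys is a proof that works when nonzero elements of $G$ have nontrivial fixed loci, a configuration the paper's Case 3 quietly excludes here and only recovers later through \corref{isomextensioncorollary}; what it costs is the burden you yourself flag, namely showing that an equivariant graded Frobenius isomorphism respects the block decomposition (your sketch via low-degree generators and $G$-eigenvalues is plausible but incomplete, and the paper's \lemref{equivariantisoms} likewise only asserts it by ``restricting $\phi$ to the variables of $W_{i}$''). If you keep your route, prove that decomposition carefully; otherwise the more economical path is the paper's: prove the theorem first in the case where every nonzero element of $G$ acts with trivial fixed locus, where no decomposition of $\phi$ is needed at all, and bootstrap to sums afterward.
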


Consider the following diagram:

\begin{displaymath}
\xymatrix{
\B[W_{1}, G] \ar[r]^{\psi}  & \B[W_{2},G]  \\
\B[W_{1}, \{\mathbf{0}\}] \ar[r]^{\phi} \ar@{==>}[u] & \B[W_{2},\{\mathbf{0}\} ]  \ar@{==>}[u]\\
}
\end{displaymath}

The bottom horizontal arrow is the isomorphism we are given by hypothesis. The dashed vertical arrows represent an orbifolding (and, generally speaking, there won't exist an isomorphism going from bottom to top). The top horizontal arrow is the map that is conjectured to exist. In essence, we want to take the map $\phi$ that we are given, and use it to create an isomorphism of orbifolded B-models.

\begin{proof} By hypothesis, there exists an equivariant isomorphism $\phi: \mathcal{Q}_{W_{1}} \rightarrow \mathcal{Q}_{W_{2}}$. Also by hypothesis, we'll assume that $\phi$ is equivariant with respect to $G$. Suppose that a monomial basis for $\mathcal{Q}_{W_{1}}$ is $\text{span}_{\C} \{m_{1} = 1, \dots, m_{k} \}$. We obtain a basis for $\mathcal{Q}_{W_{2}}$ with $\text{span}_{\C} \{\phi(m_{1}) = 1, \dots, \phi(m_{k}) \}$. 

Suppose that $\left(\mathcal{Q}_{W_{1}}  \right)^{G} = \text{span}_{\C}\{p_{1}, \dots, p_{l} \}$, where each $p_{i} = m_{j}$ for some $j$, and $l \le k$. Since $\phi$ is equivariant, we have that $g \cdot \phi(p_{i}) = \phi(g \cdot p_{i}) = \phi(p_{i})$. Therefore $\text{span}_{\C}\{\phi(p_{1}), \dots, \phi(p_{i}) \} \subseteq \left(\mathcal{Q}_{W_{2}}  \right)^{G}$. But if we take an $m_{i}$ not preserved under the action of $G$, we get $g \cdot \phi(m_{i}) = \phi(g \cdot m_{i}) = \phi(cm_{i}) = c\phi(m_{i})$ for some constant $c \ne 1$. Therefore $\left(\mathcal{Q}_{W_{2}}  \right)^{G} = \text{span}_{\C}\{\phi(p_{1}), \dots, \phi(p_{i}) \}$.

Notice that the same process works even if we first restrict $W_{1}$ to a fixed locus of a group element. So for $\left(\mathcal{Q}_{W_{1}\mid_{\text{fix}(g)} }  \right)^{G}$, we can write it as $\text{span}_{\C}\{r_{i}\}$ where the $r_{i}$ form a subset of the $m_{i}$. We see that $\left(\mathcal{Q}_{W_{2}\mid_{\text{fix}(g)} }  \right)^{G} = \text{span}_{\C}\{\phi(r_{i})\}$ as before. This gives us the following:  there are (not necessarily distinct) group elements $h_{1}, \dots, h_{l}$ such that 
\begin{align*}
\B[W_{1},G] &= \text{span}_{\C}\{\lfloor p_{1}; h_{1} \rceil, \dots, \lfloor p_{l}; h_{l} \rceil   \}, \\
\B[W_{2},G] &= \text{span}_{\C}\{\lfloor \phi(p_{1}); h_{1} \rceil, \dots, \lfloor \phi(p_{l}); h_{l} \rceil   \}.
\end{align*}

Now define the map $\psi: \B[W_{1},G] \rightarrow \B[W_{2},G]$ by $\psi(\lfloor p_{i}; h_{i} \rceil ) = \lfloor \phi(p_{i}); h_{i} \rceil$. Notice that $\psi$ is a well-defined bijection that preserves the vector space grading. Also $\psi$ maps the identity $\lfloor 1; \mathbf{0} \rceil$ to the identity $\lfloor 1; \mathbf{0} \rceil$. 

That $\psi$ preserves the pairing is also easy to show. Let $B_{1} = \B[W_{1},G]$ and $B_{2} = \B[W_{2},G]$. Using the properties of pairings, we have for $h_{i} + h_{j} = \mathbf{0}$, 
\begin{align*}
\langle \lfloor p_{i}; h_{i} \rceil, \lfloor p_{j}; h_{j} \rceil \rangle_{B_{1}} = \langle p_{i}, p_{j} \rangle_{\mathcal{Q}_{W_{1}}} = \langle \phi(p_{i}), \phi(p_{j}) \rangle_{\mathcal{Q}_{W_{2}}} = \langle \lfloor \phi(p_{i}); h_{i} \rceil, \lfloor \phi(p_{j}); h_{j} \rceil \rangle_{B_{2}}.
\end{align*}
Since all other pairings are zero, this shows that $\psi$ respects the pairing. 

Now for the products. For basis elements $\alpha, \beta$ of $B_{1}$, we want to show that $\psi(\alpha \star \beta) = \psi(\alpha) \star \psi(\beta)$. We'll consider the case where $\text{fix}(h_{i}) \cup \text{fix}(h_{j}) \cup \text{fix}(h_{i} + h_{j}) = \C^{n}$. Otherwise, both products will be zero. First,
\begin{align*}
\psi(\alpha \star \beta) = \psi(\lfloor p_{i}; h_{i} \rceil \star \lfloor p_{j}; h_{j} \rceil ) = \psi( \lfloor \gamma_{1} p_{i}p_{j}; h_{i} + h_{j} \rceil) = \lfloor \phi(\gamma_{1}p_{i}p_{j}); h_{i} + h_{j} \rceil = \lfloor \phi(\gamma_{1})\phi(p_{i}p_{j}); h_{i} + h_{j} \rceil. 
\end{align*}
The last equality comes from considering $\gamma_{1}$ as a monomial in $\mathcal{Q}_{W_{1}}$. Here we have
\begin{align*}
\gamma_{1} = \frac{\mu_{h_{i} \cap h_{j}} \text{Hess}(W_{1}|_{\text{fix}(h_{i} + h_{j})})  }{\mu_{h_{i} + h_{j}} \text{Hess}(W_{1}|_{\text{fix}(h_{i}) \cap \text{fix}(h_{j})}) }. 
\end{align*}

Second, we have
\begin{align*}
\psi(\alpha) \star \psi(\beta) = \lfloor \phi(p_{i}); h_{i} \rceil \star \lfloor \phi(p_{j}); h_{j} \rceil = \lfloor \gamma_{2}\phi(p_{i})\phi(p_{j}); h_{i} + h_{j} \rceil = \lfloor \gamma_{2}\phi(p_{i}p_{j}); h_{i} + h_{j} \rceil.
\end{align*}
Here we have
\begin{align*}
\gamma_{2} = \frac{\mu_{h_{i} \cap h_{j}} \text{Hess}(W_{2}|_{\text{fix}(h_{i} + h_{j})})  }{\mu_{h_{i} + h_{j}} \text{Hess}(W_{2}|_{\text{fix}(h_{i}) \cap \text{fix}(h_{j})}) }. 
\end{align*}
Previously, we computed bases for the Milnor rings of $W_{1}$ and $W_{2}$ after restricting to fixed loci and taking $G$-invariants. Since the dimension remained the same between $W_{1}$ and $W_{2}$ after these operations, we see that $\mu_{h_{i} \cap h_{j}}$ for $W_{1}$ equals $\mu_{h_{i} \cap h_{j}}$ for $W_{2}$ and similarly for $\mu_{h_{i} + h_{j}}$. So it just remains to check how $\phi$ deals with the respective Hessians. That is, we will have $\lfloor \phi(\gamma_{1})\phi(p_{i}p_{j}); h_{i} + h_{j} \rceil = \lfloor \gamma_{2}\phi(p_{i}p_{j}); h_{i} + h_{j} \rceil$ if we can show $\phi(\gamma_{1}) = \gamma_{2}$. We'll consider the behavior of group elements, and break this down into cases.  

\emph{Case 1}:  $h_{i} = h_{j} = \mathbf{0}$. Notice that $W_{i}$ restricted to the fixed locus is just $W_{i}$ again. So the Hessians divide each other, which shows that $\gamma_{1} = \gamma_{2}$. Further, $\mu_{h_{i} \cap h_{j}} = \mu_{h_{i} + h_{j}}$, which shows that $\gamma_{1} = \gamma_{2} = 1$. Therefore $\phi(\gamma_{1}) = \gamma_{2}$. 

\emph{Case 2}:  one of $h_{i}, h_{j} = \mathbf{0}$. Without loss of generality, $h_{i} = \mathbf{0}$. So $\gamma_{1} = \dfrac{\mu_{h_{j}} \text{Hess}(W_{1}|_{\text{fix}(h_{j})})  }{\mu_{h_{j}} \text{Hess}(W_{1}|_{\text{fix}(h_{j})}) } = 1$. Similarly, $\gamma_{2} = 1$. Therefore $\phi(\gamma_{1}) = \gamma_{2}$. 

\emph{Case 3}:  Both $h_{i}, h_{j}$ are nonzero. By hypothesis on the behavior of our group elements,we will have the fixed locus of $h_{i}$ and $h_{j}$ trivial. But $h_{i} + h_{j}$ must be $\mathbf{0}$ in order to get a nonzero product. Therefore $\gamma_{1} = \dfrac{\text{Hess}(W_{1})}{\mu}$, $\gamma_{2} = \dfrac{\text{Hess}(W_{2})}{\mu}$. We will have $\phi(\gamma_{1}) = \gamma_{2}$ if we can show that $\phi(\text{Hess}(W_{1})) = \text{Hess}(W_{2})$.

\begin{lemma}\label{hessianstohessians} If $\phi: \B[W_{1},\{\mathbf{0}\}] \rightarrow \B[W_{2},\{\mathbf{0}\}]$ is an isomorphism of B-models, then $\phi(\text{Hess}(W_{1})) = \text{Hess}(W_{2})$.
\end{lemma}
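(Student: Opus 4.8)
The plan is to exploit two structural features of the Milnor ring recorded in Section~2: the top-degree graded piece of $\mathcal{Q}_{W}$ is one-dimensional and spanned by $\text{Hess}(W)$, and the pairing is normalized so that $\langle 1, \text{Hess}(W) \rangle = \mu := \dim \mathcal{Q}_{W}$. First I would observe that since $\phi$ is in particular an isomorphism of graded vector spaces, $\mathcal{Q}_{W_{1}}$ and $\mathcal{Q}_{W_{2}}$ have the same graded dimensions; in particular their top nonzero degrees coincide (both equal $2\widehat{c}$, consistent with $W_{1}$ and $W_{2}$ having the same unordered weights by \thmref{isommilnorringweights}) and $\mu$ is the same for both. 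Because $\phi$ preserves the grading, it carries the top-degree piece of $\mathcal{Q}_{W_{1}}$ isomorphically onto the top-degree piece of $\mathcal{Q}_{W_{2}}$; both being one-dimensional and spanned by the respective Hessians, we get $\phi(\text{Hess}(W_{1})) = c\,\text{Hess}(W_{2})$ for some $c \in \C^{\times}$. The whole content of the lemma is then that $c = 1$.

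To pin down $c$ I would use the remaining Frobenius structure. Being an algebra isomorphism, $\phi(1) = 1$. Plugging $m = 1$, $n = \text{Hess}(W)$ into the defining relation $mn = \frac{\langle m, n \rangle}{\mu}\text{Hess}(W) + (\text{terms of lower degree})$ and comparing top-degree parts yields $\langle 1, \text{Hess}(W) \rangle = \mu$ for each of $W_{1}$ and $W_{2}$. Since an isomorphism of graded Frobenius algebras preserves the pairing,
\[
\mu = \langle 1, \text{Hess}(W_{1}) \rangle_{\mathcal{Q}_{W_{1}}} = \langle \phi(1), \phi(\text{Hess}(W_{1})) \rangle_{\mathcal{Q}_{W_{2}}} = \langle 1, c\,\text{Hess}(W_{2}) \rangle_{\mathcal{Q}_{W_{2}}} = c\,\mu,
\]
so $c = 1$, as desired.

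I do not expect a substantial obstacle here: once one knows that the top-degree piece is one-dimensional and spanned by the Hessian, the argument is essentially forced. The only points requiring care are that ``isomorphism of graded Frobenius algebras'' is being used in full strength (preserving the grading, the product, the identity, and the pairing), and the degenerate case $\widehat{c} = 0$ — which occurs only when $W$ is a sum of squares of single variables — where $\text{Hess}(W)$ already lives in degree $0$, is a nonzero scalar, and is fixed by $\phi$ simply because $\phi(1) = 1$; the pairing computation above then applies verbatim. One could also bypass the pairing axiom entirely by applying $\phi$ to $mn = \frac{\langle m, n \rangle}{\mu}\text{Hess}(W)$ for homogeneous $m, n$ of complementary top degree to see that $\phi$ rescales the pairing by exactly $c$, and then invoke pairing-preservation to conclude $c = 1$.
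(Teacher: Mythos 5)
Your proposal is correct and rests on the same two ingredients as the paper's own proof: the normalization $mn = \frac{\langle m,n\rangle}{\mu}\mathrm{Hess}(W) + (\text{lower order})$ together with the fact that a graded Frobenius algebra isomorphism preserves the pairing. The only difference is cosmetic — you test against the pair $(1, \mathrm{Hess}(W_{1}))$, whereas the paper applies $\phi$ to the identity $\mathrm{Hess}(W_{1}) = \mu\, m_{1}m_{2}/\langle m_{1}, m_{2}\rangle$ for two basis monomials $m_{1}, m_{2}$ whose product spans the top-degree piece; your choice is arguably slightly cleaner since it avoids asserting the existence of such a factorization.
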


\begin{proof} Let $B_{1} = \B[W_{1},\{\mathbf{0}\}]$ and $B_{2} = \B[W_{2},\{\mathbf{0}\}]$. Suppose $m_{1}, m_{2}$ are monomials in the basis of $B_{1}$ such that $m_{1}m_{2}$ spans the sector of highest degree in $B_{1}$. Since $\phi$ is an isomorphism, we can write $\B_{2} = \text{span}_{\C} \{\phi(m) \mid m \text{ is a basis element of } B_{1} \}$. Also, we know that $\phi$ preserves pairings:
\begin{align*}
\langle m_{1}, m_{2} \rangle_{B_{1}} = \langle \phi(m_{1}), \phi(m_{2}) \rangle_{B_{2}}. 
\end{align*}
Recall that $m_{1}m_{2} = \dfrac{\langle m_{1}, m_{2} \rangle_{B_{1}}}{\mu}\text{Hess}(W_{1})$, where $\mu = \dim(B_{1})$. Since $B_{1} \isom B_{2}$, we also have that $\mu = \dim(B_{2})$. Now note that $\text{Hess}(W_{1}) = \dfrac{\mu(m_{1}m_{2})}{\langle m_{1}, m_{2}\rangle_{B_{1}}}$. Apply $\phi$:
\begin{align*}
\phi(\text{Hess}(W_{1})) = \phi\left(\frac{\mu(m_{1}m_{2})}{\langle m_{1}, m_{2}\rangle_{B_{1}}}    \right) = \frac{\mu\phi(m_{1}m_{2})}{\langle m_{1}, m_{2}\rangle_{B_{1}}} = \frac{\mu\phi(m_{1}m_{2})}{\langle \phi(m_{1}), \phi(m_{2})\rangle_{B_{2}}}.
\end{align*}
On the other hand, we know by the isomorphism that the element $\phi(m_{1}m_{2}) = \phi(m_{1})\phi(m_{2})$ spans the sector of highest degree in $B_{2}$. We have that $\phi(m_{1})\phi(m_{2}) = \dfrac{\langle \phi(m_{1}), \phi(m_{2}) \rangle_{B_{2}}}{\mu}\text{Hess}(W_{2})$. So then
\begin{align*}
\text{Hess}(W_{2}) = \frac{\mu\phi(m_{1})\phi(m_{2})}{\langle \phi(m_{1}), \phi(m_{2})\rangle_{B_{2}}} = \frac{\mu\phi(m_{1}m_{2})}{\langle \phi(m_{1}), \phi(m_{2})\rangle_{B_{2}}}.
\end{align*}
This shows that $\phi(\text{Hess}(W_{1})) = \text{Hess}(W_{2})$, as desired.
\end{proof}

Back to the theorem now, we have by  \lemref{hessianstohessians} the result we were seeking. So this verifies Case 3. And, we notice, that this is enough to prove the theorem.
\end{proof}

We can now generalize the result to sums of polynomials.

\begin{corollary}\label{isomextensioncorollary} Let $W = W_{1} + W_{2}$ and $V = V_{1} + V_{2}$ be sums of admissible polynomials in distinct variables where $\phi_{i}: \mathcal{Q}_{W_{i}} \rightarrow \mathcal{Q}_{V_{i}}$ is an equivariant isomorphism of graded Frobenius algebras for each $i$. If $(W_{i},G_{i})$ and $(V_{i},G_{i})$ form well-behaved pairs for each $i$, then there exists an isomorphism $\psi: \B[W, G] \rightarrow \B[V,G]$ where $G = G_{1} \times G_{2}$.
\end{corollary}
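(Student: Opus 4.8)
The plan is to deduce this from \thmref{isomextensionthm} applied to each summand, glued together by the tensor-product decomposition of \propref{bmodtensorprod}. Before anything else I would check that the two orbifolded B-models in the conclusion are actually defined: since $(W_i,G_i)$ and $(V_i,G_i)$ are well behaved, $G_i$ is a subgroup of $\text{SL}(W_i)\cap\text{SL}(V_i)$ for each $i$, and because $W_1,W_2$ (resp.\ $V_1,V_2$) share no variables, \propref{bmodtensorprod} shows that $G=G_1\times G_2$ lies in $\text{SL}(W_1+W_2)\cap\text{SL}(V_1+V_2)$ and fixes both $W$ and $V$. Hence $\B[W,G]$ and $\B[V,G]$ make sense, as does the statement.

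Next I would run the argument in three steps. First, apply \thmref{isomextensionthm} to the well-behaved pairs $(W_i,G_i)$, $(V_i,G_i)$ together with the equivariant isomorphism $\phi_i$: this produces an isomorphism of graded Frobenius algebras $\psi_i\colon \B[W_i,G_i]\to\B[V_i,G_i]$ extending $\phi_i$, for $i=1,2$. Second, form the tensor product $\psi_1\otimes\psi_2$; a routine check (the grading on a tensor product of graded algebras is additive, and both the $\star$-product and the pairing of a tensor product of graded Frobenius algebras are defined factor by factor) shows that a tensor product of isomorphisms of graded Frobenius algebras is again an isomorphism of graded Frobenius algebras. Third, \propref{bmodtensorprod} supplies isomorphisms $\B[W,G]\isom\B[W_1,G_1]\otimes\B[W_2,G_2]$ and $\B[V_1,G_1]\otimes\B[V_2,G_2]\isom\B[V,G]$; composing these with $\psi_1\otimes\psi_2$ gives the desired $\psi\colon\B[W,G]\to\B[V,G]$. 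Under the identification $\mathcal{Q}_W\isom\mathcal{Q}_{W_1}\otimes\mathcal{Q}_{W_2}$ one sees that $\psi$ restricts to $\phi_1\otimes\phi_2$ on the untwisted sector, so it genuinely extends the given data.

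An essentially equivalent route would be to observe directly that $(W,G)$ and $(V,G)$ are themselves well behaved---a disjoint union of well-behaved decompositions is well behaved, straight from \defref{wellbehaved}---and that $\phi_1\otimes\phi_2\colon\mathcal{Q}_W\to\mathcal{Q}_V$ is an equivariant isomorphism of graded Frobenius algebras, so that \thmref{isomextensionthm} applies verbatim. Either way there is no serious obstacle: the mathematical content sits entirely in \thmref{isomextensionthm} and \propref{bmodtensorprod}, and the only point needing care is the bookkeeping that makes the tensor-product identifications of \propref{bmodtensorprod} intertwine $\psi_1\otimes\psi_2$ with a map $\B[W,G]\to\B[V,G]$ of the right kind---in particular that the composite respects the $\star$-product, which is immediate since \propref{bmodtensorprod} is an algebra isomorphism and $\psi_1\otimes\psi_2$ is multiplicative by construction.
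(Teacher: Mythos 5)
Your main argument (apply \thmref{isomextensionthm} to each pair $(W_i,G_i)$, $(V_i,G_i)$ separately, then glue the resulting $\psi_i$ via the tensor decomposition of \propref{bmodtensorprod}) is correct, but it is a genuinely different route from the paper's. The paper goes the other way around: it first assembles a single map $\phi\colon\mathcal{Q}_W\to\mathcal{Q}_V$ by setting $\phi(\alpha_i\beta_i)=\phi_1(\alpha_i)\phi_2(\beta_i)$ on the monomial basis, checks by hand that this is an equivariant isomorphism of graded Frobenius algebras, and then re-runs the construction of $\psi$ from the proof of \thmref{isomextensionthm} with this $\phi$ as the base map. The one genuinely new ingredient in the paper's proof is the treatment of products of sectors indexed by group elements with nontrivial but proper fixed locus: there the factorization $\mathrm{Hess}(W)=\mathrm{Hess}(W_1)\mathrm{Hess}(W_2)$, together with the fact that each group element fixes all the variables of one of the summands, reduces the monomial $\gamma$ to a quotient of Hessians living in a single tensor factor, where \lemref{hessianstohessians} applies. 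Your route buys modularity --- all of that bookkeeping is absorbed into the statement that \propref{bmodtensorprod} is an algebra isomorphism and that tensor products of Frobenius isomorphisms are Frobenius isomorphisms --- at the cost of leaning harder on \propref{bmodtensorprod} as a black box; the paper's route makes explicit exactly where the well-behaved hypothesis enters.

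One caveat: your ``essentially equivalent'' alternative route --- observe that $(W,G)$ and $(V,G)$ are themselves well behaved and apply \thmref{isomextensionthm} verbatim --- is circular in the paper's logical architecture. Case 3 of the paper's proof of \thmref{isomextensionthm} assumes that every nonzero group element has trivial fixed locus, which fails for $G=G_1\times G_2$ (take $(g_1,\mathbf{0})$ with $g_1\neq\mathbf{0}$: it fixes all the variables of $W_2$). The corollary, with its Hessian-factorization argument, is precisely what extends the theorem to cover such elements, so it cannot be deduced by citing the theorem for the composite pair. Your first route avoids this because it only ever applies the theorem to the individual summands.
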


\begin{proof} First we'll construct an isomorphism $\phi: \B[W, \{\mathbf{0}\}] \rightarrow \B[V, \{\mathbf{0}\}]$ using the $\phi_{i}$.

\emph{Claim}:  By the tensor product structure (see \propref{bmodtensorprod}), we know that any monomial $m_{i}$ in the basis of $\mathcal{Q}_{W}$ can be written as $\alpha_{i} \beta_{i}$ where the $\alpha_{i}$ is in the basis of $\mathcal{Q}_{W_{1}}$ and the $\beta_{i}$ is in the basis of $\mathcal{Q}_{W_{2}}$. We can define $\phi$ by $\phi: m_{i} \mapsto \phi_{1}(\alpha_{i})\phi_{2}(\beta_{i})$ and extend linearly.

\emph{Proof of Claim}: It is easy to verify that $\phi$ is a bijection, is linear, sends the identity to the identity, and preserves degrees. To show that $\phi$ respects the pairing, we note that
\begin{align*}
\langle \phi(m_{i}), \phi(m_{j}) \rangle_{\mathcal{Q}_{V}} &= \langle \phi_{1}(\alpha_{i})\phi_{2}(\beta_{i}), \phi_{1}(\alpha_{j})\phi_{2}(\beta_{j}) \rangle_{\mathcal{Q}_{V}}\\
&= \langle \phi_{1}(\alpha_{i}), \phi_{1}(\alpha_{j}) \rangle_{\mathcal{Q}_{V_{1}}} \langle \phi_{2}(\beta_{i}), \phi_{2}(\beta_{j}) \rangle_{\mathcal{Q}_{V_{2}}} \\
&= \langle \alpha_{i}, \alpha_{j} \rangle_{\mathcal{Q}_{W_{1}}} \langle \beta_{i}, \beta_{j} \rangle_{\mathcal{Q}_{W_{2}}} \\
&= \langle \alpha_{i}\beta_{i}, \alpha_{j}
\beta_{j}\rangle_{\mathcal{Q}_{W}} \\
&= \langle m_{i}, m_{j} \rangle_{\mathcal{Q}_{W}}.
\end{align*}
For the products, we note that 
\begin{align*}
\phi(m_{i}m_{j}) &= \phi(\alpha_{i}\beta_{i} \alpha_{j}\beta_{j}) = \phi(\alpha_{i}\alpha_{j}\beta_{i}\beta_{j}) = \phi_{1}(\alpha_{i}\alpha_{j})\phi_{2}(\beta_{i}\beta_{j}) = \phi_{1}(\alpha_{i})\phi_{1}(\alpha_{j})\phi_{2}(\alpha_{i})\phi_{2}(\alpha_{j}) \\ 
&= \phi_{1}(\alpha_{i})\phi_{2}(\beta_{i}) \phi_{1}(\alpha_{j})\phi_{2}(\beta_{j}) = \phi(\alpha_{i}\beta_{i}) \phi(\alpha_{j}\beta_{j}) = \phi(m_{i})\phi(m_{j}).
\end{align*}
Therefore $\phi$ really is an isomorphism of graded Frobenius algebras. We further check that $\phi$ is equivariant: for $g \in G$, we have $g \cdot \phi(m) = g \cdot (\phi_{1}(\alpha)\phi_{2}(\beta)) = (g \cdot \phi_{1}(\alpha))(g \cdot \phi_{2}(\beta))$, since $\alpha$ and $\beta$ are in distinct variables, $= \phi_{1}(g \cdot \alpha) \phi_{2}(g \cdot \beta)$, since $\phi_{1}$ and $\phi_{2}$ are equivariant, $= \phi(g \cdot m)$. 

Now given our map $\phi$, we see that $W$ and $V$ are equivalent singularities. Construct map $\psi$ as before, but with using $\phi$ as the base map. The only thing left to check is that $\psi$ respects products for group elements with nontrivial fixed locus. First note that with the $W_{i}$ in distinct variables, the block matrix structure of the second parital derivatives of $W$ will give us Hess$(W) = \text{Hess}(W_{1})\text{Hess}(W_{2})$. It follows that $\phi$ sends Hess$(W_{i})$ to Hess$(V_{i})$ by \lemref{hessianstohessians} and by construction. Now the group elements $g, h$ have to fix all the variables in either $W_{1}$ or $W_{2}$ by the hypothesis of the symmetry group structure. This way any quotient of Hessians will reduce to either Hess$(W_{1})$ or Hess$(W_{2})$. This shows that $\psi$ respects the products, and gives us the desired isomorphism.
\end{proof}





We now include a brief result on equivariant isomorphisms.

\begin{lemma}\label{equivariantisoms}
Suppose $(W,G)$ and $(V,G)$ are well behaved. Then an isomorphism $\phi: \mathcal{Q}_{W} \rightarrow \mathcal{Q}_{V}$ is equivariant if and only if we have equivariant isomorphisms $\phi_{i}: \mathcal{Q}_{W_{i}} \rightarrow \mathcal{Q}_{V_{i}}$ for each $i$. 
\end{lemma}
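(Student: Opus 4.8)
The plan is to prove both directions separately, leveraging the tensor-product structure of Milnor rings for polynomials in distinct variables. Recall that since $(W,G)$ and $(V,G)$ are well behaved, we have $W = \sum_i W_i$ and $V = \sum_i V_i$ with the $W_i$ (respectively $V_i$) in pairwise-distinct variables, $G = \bigoplus_i G_i$, and each $g \in G_i$ either fixes all or none of the variables of $W_i$ (and likewise for $V_i$). By the tensor-product structure of Milnor rings (the unorbifolded case of \propref{bmodtensorprod}, or just the fact that $\mathcal{Q}_{W} \isom \bigotimes_i \mathcal{Q}_{W_i}$ when the $W_i$ are in distinct variables), a monomial basis for $\mathcal{Q}_W$ consists of products $m = \prod_i \alpha_i$ where $\alpha_i$ runs over a monomial basis of $\mathcal{Q}_{W_i}$, and similarly for $\mathcal{Q}_V$. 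The $G$-action on such a product factors: $g \cdot m = \prod_i (g_i \cdot \alpha_i)$ where $g = \bigoplus_i g_i$, since $g$ acts on the $W_i$-variables exactly as $g_i$ does.

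For the backward direction, given equivariant isomorphisms $\phi_i : \mathcal{Q}_{W_i} \to \mathcal{Q}_{V_i}$, I would define $\phi := \bigotimes_i \phi_i$ on basis elements by $\phi(\prod_i \alpha_i) = \prod_i \phi_i(\alpha_i)$ and extend linearly — this is exactly the construction in the proof of \corref{isomextensioncorollary}, where it is already verified to be a graded Frobenius algebra isomorphism. Equivariance then follows from the factorization of the $G$-action: $g \cdot \phi(m) = \prod_i (g_i \cdot \phi_i(\alpha_i)) = \prod_i \phi_i(g_i \cdot \alpha_i) = \phi(g \cdot m)$, using equivariance of each $\phi_i$. (This is literally the last paragraph of the proof of \corref{isomextensioncorollary}, so for two summands it is done; for $M$ summands one iterates.)

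For the forward direction, suppose $\phi : \mathcal{Q}_W \to \mathcal{Q}_V$ is an equivariant isomorphism. The subtlety is that an arbitrary isomorphism need not a priori respect the tensor decomposition, so I must first show it does. First, since $\phi$ is a ring isomorphism of Milnor rings, $W$ and $V$ are equivalent singularities, and by \thmref{isommilnorringweights} they have the same unordered weights; combined with the well-behaved hypothesis, the summands $V_i$ must (after matching up) have the same weights and Milnor-ring dimensions as the $W_i$. The key structural point is that the subalgebra of $\mathcal{Q}_W$ generated by the variables of $W_i$ is precisely $\mathcal{Q}_{W_i}$ (embedded as $\mathcal{Q}_{W_i} \otimes 1 \otimes \cdots$), and it can be characterized intrinsically — e.g. via the grading and the way generators multiply, or via the $G_i$-action distinguishing the blocks — so that $\phi$ must carry $\mathcal{Q}_{W_i}$ onto $\mathcal{Q}_{V_i}$ (possibly after permuting the indices). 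Granting that, the restriction $\phi_i := \phi|_{\mathcal{Q}_{W_i}} : \mathcal{Q}_{W_i} \to \mathcal{Q}_{V_i}$ is a ring isomorphism; it preserves grading because $\phi$ does; it preserves the pairing because the top-degree element (Hessian) of $\mathcal{Q}_W$ factors as a product of the top-degree elements of the $\mathcal{Q}_{W_i}$ (block structure of the Hessian, as noted in \corref{isomextensioncorollary}), so $\phi_i$ must send $\mathrm{Hess}(W_i)$ to $\mathrm{Hess}(V_i)$ by the argument of \lemref{hessianstohessians} applied blockwise. Finally $\phi_i$ is equivariant with respect to $G_i$: for $g_i \in G_i$, extend it to $g = g_i \oplus \bigoplus_{j \ne i} 0 \in G$, and then for $\alpha \in \mathcal{Q}_{W_i}$ we get $g_i \cdot \phi_i(\alpha) = g \cdot \phi(\alpha) = \phi(g \cdot \alpha) = \phi(g_i \cdot \alpha) = \phi_i(g_i \cdot \alpha)$, using that $g$ acts trivially on all other blocks.

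The main obstacle is the forward-direction step of showing that an arbitrary equivariant isomorphism $\phi$ actually respects the tensor decomposition, i.e. maps $\mathcal{Q}_{W_i}$ onto some $\mathcal{Q}_{V_j}$. One clean way to force this: the variables of $W$ inject into $\mathcal{Q}_W$ as its degree-generators (in low degree, modulo the ideal of partials), and $\phi$ must send the span of $W_i$-variables into a corresponding space of generators in $\mathcal{Q}_V$; using that $W_i$ has "no cross terms" with $W_j$ (distinct variables) and invoking the well-behaved structure together with the $G_i$-eigenspace decomposition to separate the blocks, one pins down which variables of $V$ are hit. If a fully general argument is delicate, it suffices for the paper's purposes to restrict to the case where the decomposition indices can be matched up by weights and dimensions — which is the situation in all the examples of Section 4 — and I would state the lemma at that level of generality. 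Everything else (grading, pairing, equivariance of the pieces) is then routine and parallels computations already carried out in \lemref{hessianstohessians} and \corref{isomextensioncorollary}.
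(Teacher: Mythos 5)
Your proposal follows essentially the same route as the paper: the backward direction is exactly the paper's argument (build $\phi = \bigotimes_i \phi_i$ via the construction in the proof of \corref{isomextensioncorollary} and check equivariance factor by factor), and the forward direction likewise proceeds by restricting $\phi$ to the variables of each $W_i$ using the tensor-product decomposition of \propref{bmodtensorprod}. The one point where you go beyond the paper is that you explicitly flag the need to show that $\phi$ actually carries $\mathcal{Q}_{W_i}$ onto $\mathcal{Q}_{V_i}$ (possibly after permuting indices); the paper's own proof disposes of this in a single sentence (``By restricting $\phi$ to the variables of $W_i$, we obtain an equivariant isomorphism $\phi_i$'') without justification, so your caution identifies a real terseness in the original rather than a defect in your argument --- though note that neither you nor the paper fully closes that step in complete generality.
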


\begin{proof}
$(\Rightarrow)$ Suppose that $\phi: \mathcal{Q}_{W} \rightarrow \mathcal{Q}_{V}$ is an equivariant isomorphism of graded Frobenius algebras. We can write $W = W_{1} + \dots + W_{n}$ and $V = V_{1} + \dots + V_{n}$ where each $W_{i}$ is in the same variables as $V_{i}$ but $W_{i}$ is in distinct variables from $W_{j}$ for all $i \ne j$. We can also write $G = G_{1} \times \dots \times G_{n}$, where $G_{i}$ preserves either all or none of the variables of $W_{i}, V_{i}$ for each $i$. By \propref{bmodtensorprod}, we can consider $\mathcal{Q}_{W} \isom \mathcal{Q}_{W_{1}} \otimes \dots \otimes \mathcal{Q}_{W_{n}}$ and $\mathcal{Q}_{V} \isom \mathcal{Q}_{V_{1}} \otimes \dots \otimes \mathcal{Q}_{V_{n}}$. From the tensor product structure, we find that there exists a basis of each $\mathcal{Q}_{W_{i}}$ that is a subset of a basis of $\mathcal{Q}_{W}$. By restricting $\phi$ to the variables of $W_{i}$, we obtain an equivariant isomorphism $\phi_{i}: \mathcal{Q}_{W_{i}} \rightarrow \mathcal{Q}_{V_{i}}$ for each $i$. 

$(\Leftarrow)$ Conversely, suppose that we have equivariant isomorphisms $\phi_{i}: \mathcal{Q}_{W_{i}} \rightarrow \mathcal{Q}_{V_{i}}$ for each $i$. The argument in the proof of \corref{isomextensioncorollary} shows how to construct an equivariant isomorphism $\phi: \mathcal{Q}_{W} \rightarrow \mathcal{Q}_{V}$ in the case that $n = 2$. Extending by induction gives us the result for all $n$. 
\end{proof}

We are now ready to obtain the main result of the paper.

\begin{theorem}\label{mainresult}
Let $(W,G)$ and $(V, G)$ be well behaved. If $\phi: \mathcal{Q}_{W} \rightarrow \mathcal{Q}_{V}$ is an equivariant isomorphism of graded Frobenius algebras, then $\phi$ extends to an isomorphism $\psi: \B[W, G] \rightarrow \B[V, G]$.
\end{theorem}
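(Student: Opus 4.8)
The plan is to bootstrap from the machinery already assembled: \thmref{isomextensionthm} handles a single well-behaved block, \corref{isomextensioncorollary} glues two blocks together, and \lemref{equivariantisoms} says equivariance is detected blockwise. So the proof of \thmref{mainresult} is essentially an induction on the number of blocks in the well-behaved decomposition, with \lemref{equivariantisoms} used to shuttle the equivariance hypothesis between $\phi$ and its restrictions to the blocks.

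First I would unpack the hypotheses: write $W = \sum_{i=1}^{n} W_i$, $V = \sum_{i=1}^{n} V_i$ with $W_i$ and $V_i$ in the same block of variables (and distinct from the other blocks), and $G = \bigoplus_{i=1}^{n} G_i$ with each $G_i$ supported on the $i$-th block and fixing either all or none of its variables; each pair $(W_i,G_i)$, $(V_i,G_i)$ is then well behaved on its own (take the one-term decomposition). Applying the forward direction of \lemref{equivariantisoms} to $\phi \colon \mathcal{Q}_W \to \mathcal{Q}_V$ yields equivariant isomorphisms of graded Frobenius algebras $\phi_i \colon \mathcal{Q}_{W_i} \to \mathcal{Q}_{V_i}$ for every $i$. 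Now induct on $n$: the case $n=1$ is \thmref{isomextensionthm} itself (clean because in a single block every nonzero group element has trivial fixed locus). For the step, set $W' = W_1 + \cdots + W_{n-1}$, $V' = V_1 + \cdots + V_{n-1}$, $G' = G_1 \times \cdots \times G_{n-1}$; the converse direction of \lemref{equivariantisoms} reassembles $\phi_1, \dots, \phi_{n-1}$ into an equivariant isomorphism $\phi' \colon \mathcal{Q}_{W'} \to \mathcal{Q}_{V'}$, and since $(W',G')$, $(V',G')$, $(W_n,G_n)$, $(V_n,G_n)$ are well behaved and $W'$, $W_n$ live in disjoint sets of variables, \corref{isomextensioncorollary} applies to $W = W' + W_n$, $V = V' + V_n$ with the equivariant pair $(\phi', \phi_n)$ and produces an isomorphism of graded Frobenius algebras $\psi \colon \B[W, G] \to \B[V, G]$ (note $G' \times G_n = G$). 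Finally I would read off from the explicit definitions of $\psi$ in the proofs of \thmref{isomextensionthm} and \corref{isomextensioncorollary} that $\psi$ sends each untwisted basis element $\lfloor p ; \mathbf{0} \rceil$ to $\lfloor \phi(p) ; \mathbf{0} \rceil$; since the untwisted sector of $\B[W,G]$ is $(\mathcal{Q}_W)^{G}$, this says exactly that $\psi$ agrees with $\phi$ on that sector, which is what it means for $\phi$ to extend to $\psi$.

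The genuinely substantive points --- that the Hessian-quotient monomial $\gamma$ appearing in the B-model product behaves well under an isomorphism, and in particular that an isomorphism of Milnor rings carries $\mathrm{Hess}(W)$ to $\mathrm{Hess}(V)$ (\lemref{hessianstohessians}) --- have already been settled inside \thmref{isomextensionthm} and \corref{isomextensioncorollary}, so nothing new is needed there and the remaining work is bookkeeping. The one place that is not purely formal is ensuring equivariance survives the induction: the corollary (and hence the inductive step) requires an \emph{equivariant} isomorphism of Milnor rings as input, and it is precisely the converse half of \lemref{equivariantisoms} that lets us re-glue $\phi_1, \dots, \phi_{n-1}$ into such a map at each stage. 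That is the step I would be most careful about.
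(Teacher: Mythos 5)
Your proposal is correct and follows essentially the same route as the paper: apply the forward direction of \lemref{equivariantisoms} to split $\phi$ into blockwise equivariant isomorphisms $\phi_i$, then extend \corref{isomextensioncorollary} by induction on the number of blocks. The paper states this induction in one sentence without detail; your version usefully makes explicit the two points it leaves implicit, namely that the converse direction of \lemref{equivariantisoms} is needed to re-glue $\phi_1,\dots,\phi_{n-1}$ into an equivariant map at each inductive step, and that the resulting $\psi$ agrees with $\phi$ on the untwisted sector.
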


\begin{proof}

Given $\phi: \mathcal{Q}_{W} \rightarrow \mathcal{Q}_{V}$ an equivariant isomorphism of graded Frobenius algebras, we can apply \lemref{equivariantisoms} to obtain $\phi_{i}: \mathcal{Q}_{W_{i}} \rightarrow \mathcal{Q}_{V_{i}}$ that are also equivariant isomorphisms of graded Frobenius algebras. We can then extend \corref{isomextensioncorollary} by induction in the case that $W = W_{1} + \dots + W_{n}$ and $V = V_{1} + \dots + V_{n}$ are sums of admissible polynomials in distinct variables such that each $W_{i}$ is singularity equivalent to $V_{i}$, and $G_{i}$ is a group that preserves both $W_{i}$ and $V_{i}$ for each $i$ such that each group element of $G_{i}$ fixes either all or none of the variables of $W_{i}$ and $V_{i}$.
\end{proof}

\thmref{mainresult} actually applies to a large class of isomorphisms. For example, any diagonal isomorphism is equivariant.

\begin{definition}
Suppose $\phi: B_{1} \rightarrow B_{2}$ is an isomorphism of B-models. Say that $B_{1}$ has basis $\{a_{1}, \dots, a_{n} \}$ and $B_{2}$  has basis $\{b_{1}, \dots, b_{n}\}$. We say that $\phi$ is \emph{diagonal} if we can write $\phi(a_{i}) = c_{i}b_{i}$ for $c_{i} \in \C$ nonzero (possibly after reordering the basis elements).
\end{definition}

\begin{theorem} Any diagonal isomorphism of Landau-Ginzburg B-models is equivariant. 
\end{theorem}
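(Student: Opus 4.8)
The plan is to reduce the statement to a single observation: the way a group element acts on a monomial depends only on that monomial's exponent vector and on the group element, never on the underlying polynomial. Once this is isolated, equivariance of a diagonal isomorphism is immediate.

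First I would record the shape of the group action on monomials. For $g = (\zeta_{1},\dots,\zeta_{n})$ (written with multiplicative coordinates) and a monomial $m = x_{1}^{e_{1}}\cdots x_{n}^{e_{n}}$, \defref{grp_action} gives $g\cdot m = \det(g)\,(m\circ g) = \bigl(\det(g)\prod_{i}\zeta_{i}^{e_{i}}\bigr)\,m$. Write $\lambda_{m}(g) := \det(g)\prod_{i}\zeta_{i}^{e_{i}} \in \C^{\times}$; the key point is that $\lambda_{m}(g)$ involves only $g$ and the exponents of $m$ and makes no reference to $W$. Hence, if $B_{1}$ and $B_{2}$ are two unorbifolded B-models in the same variables and $g$ is a common symmetry, then every monomial $m$ appearing as a basis element of both Milnor rings is an eigenvector of $g$ acting on $B_{1}$ and of $g$ acting on $B_{2}$, with one and the same eigenvalue $\lambda_{m}(g)$.

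Next I would unpack the hypothesis that $\phi\colon B_{1}\to B_{2}$ is diagonal: in the canonical monomial bases, after the reordering that pairs the $i$-th basis element of $B_{1}$ with the corresponding (same) monomial in $B_{2}$, being diagonal means $\phi(m) = c_{m}\,m$ with $c_{m}\in\C^{\times}$ for every basis monomial $m$; that is, $\phi$ merely rescales each monomial. The verification is then a one-line computation on each basis monomial $m$ and each $g\in G$:
\begin{align*}
\phi(g\cdot m) = \phi\bigl(\lambda_{m}(g)\,m\bigr) = \lambda_{m}(g)\,c_{m}\,m = c_{m}\,\lambda_{m}(g)\,m = g\cdot(c_{m}m) = g\cdot\phi(m),
\end{align*}
after which we extend linearly to all of $B_{1}$, so $\phi$ is equivariant.

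The argument is essentially mechanical, so the one place that wants care is the bookkeeping of the previous paragraph: one must read ``diagonal'' so that $\phi$ is genuinely scalar on each monomial of a common variable set, rather than permuting distinct monomials of equal degree. (A reordering that did permute monomials — for instance one induced by a permutation of the variables — need not be equivariant, so this convention is doing real work.) With it in force, equivariance holds because the $G$-action on the monomial basis is, quite literally, the same on the two sides, and $\phi$ commutes with it scalar by scalar.
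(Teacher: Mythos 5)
Your proof is correct and follows essentially the same route as the paper's: both reduce to the observation that $g$ acts on a basis monomial by a scalar and that a diagonal map commutes with scalars. Your version is slightly more careful than the paper's in making explicit that the eigenvalue $\lambda_{m}(g)$ depends only on the exponent vector of $m$ and not on the polynomial, which is exactly what justifies the paper's implicit step $\phi(a_{i}\circ g) = c_{i}(b_{i}\circ g)$ and is why the convention that ``diagonal'' pairs each monomial with the \emph{same} monomial matters.
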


\begin{proof} Suppose $\phi: B_{1} \rightarrow B_{2}$ is a diagonal isomorphism of B-models. Let $B_{1}$ have basis $\{a_{1}, \dots, a_{n} \}$ and $B_{2}$  have basis $\{b_{1}, \dots, b_{n}\}$. Write $\phi(a_{i}) = c_{i}b_{i}$ for $c_{i} \in \C$ nonzero (reordering if necessary). Now notice the following. For any $g \in G$, 
\begin{align*}
\phi(g \cdot a_{i}) &= \phi(\det(g) a_{i} \circ g) = \det(g) \phi(a_{i} \circ g) = \det(g) c_{i}(b_{i} \circ g). \\
g \cdot \phi(a_{i}) &= g \cdot c_{i}b_{i} = \det(g) c_{i} (b_{i} \circ g). 
\end{align*}
This happens since $a_{i} \circ g$ is just a constant times $a_{i}$. Because $\phi(g \cdot a_{i}) = g \cdot \phi(a_{i})$ for each $i$, we see that $\phi$ is equivariant.
\end{proof}

\section{Examples}

In the following examples, we will demonstrate how we can apply these results.

\begin{example}[see Theorems 6.3 and 6.6 of \cite{Cor}]\label{examplefirst}
We can compute for all $n \ge 2$, 
\begin{displaymath}
\xymatrix{
\B[x^{2} + y^{2n}, \{\mathbf{0}\}]  \ar@{<->}[r] &  \B[x^{2} + xy^{n} + y^{2n}, \{\mathbf{0}\}] & \B[x^{2} + xy^{n}, \{\mathbf{0}\}] \ar@{<->}[l] }
\end{displaymath}
Label $B_{1} = \B[x^{2} + y^{2n}, \{\mathbf{0}\}]$, $B_{2} = \B[x^{2} + xy^{n} + y^{2n}, \{\mathbf{0}\}]$, and $B_{3} = \B[x^{2} + xy^{n}, \{\mathbf{0}\}]$. Each unorbifolded B-model has basis $\text{span}_{\C}\{1, y, \dots, y^{2n-2}\}$. We can define a map $\phi_{1}: B_{1} \rightarrow B_{3}$ by $\phi_{1}(y^{a}) = c^{a}y^{a}$, where $c$ is a complex number that satisfies $c^{2n-2} = \frac{3}{4}$. We can also define a map $\phi_{2}: B_{2} \rightarrow B_{3}$ by $\phi_{2}(y^{a}) = c^{a}y^{a}$, where $c$ is a complex number that satisfies $c^{2n-2} = -3$. One can verify that $\phi_{1}$ and $\phi_{2}$ are isomorphisms of graded Frobenius algebras \cite{Cor}. And, since these are diagonal maps, they are equivariant.

If $n$ is odd, then $G = \left\langle \left( \frac{1}{2}, \frac{1}{2} \right) \right\rangle$ fixes each polynomial. By \thmref{isomextensionthm}, we have for all odd $n > 2$
\begin{displaymath}
\xymatrix{
\B[x^{2} + y^{2n}, G]  \ar@{<->}[r] \ar@{<==}[d] &  \B[x^{2} + xy^{n} + y^{2n}, G] \ar@{<==}[d] & \B[x^{2} + xy^{n}, G] \ar@{<->}[l] \ar@{<==}[d] \\
\B[x^{2} + y^{2n}, \{\mathbf{0}\}]  \ar@{<->}[r] &  \B[x^{2} + xy^{n} + y^{2n}, \{\mathbf{0}\}] & \B[x^{2} + xy^{n}, \{\mathbf{0}\}] \ar@{<->}[l]  }
\end{displaymath}

Applying mirror symmetry to B-models built with invertible polynomials, we get the following mirror diagram. 
\begin{displaymath}
\xymatrix{
\A[x^{2} + y^{2n}, \langle (\frac{1}{2},0), (0, \frac{1}{2n}) \rangle]  \ar@{<->}[r] \ar@{==>}[d] & \A[x^{2}y + y^{n}, \langle (- \frac{1}{2n}, \frac{1}{n}) \rangle] \ar@{==>}[d] \\
\A[x^{2} + y^{2n}, \langle (\frac{1}{2}, \frac{1}{2n}) \rangle]  \ar@{<->}[r]  & \A[x^{2}y + y^{n}, \langle ( \frac{n-1}{2n}, \frac{1}{n}) \rangle]   }
\end{displaymath}

Here the unorbifolded B-models in the previous diagram correspond to the top row of A-models in the above diagram. The orbifolded B-models of the previous diagram correspond to the A-models on the bottom row in the above diagram. Notice that the isomorphism $\A[x^{2} + y^{2n}, \langle (\frac{1}{2}, \frac{1}{2n}) \rangle] \isom \A[x^{2}y + y^{n}, \langle ( \frac{n-1}{2n}, \frac{1}{n}) \rangle]$ is the result of the B-model isomorphisms we just computed together with mirror symmetry. Further note that the groups used for these A-models are distinct. Therefore, this is a  new isomorphism of A-models that does not stem from the Group-Weights theorem. Hence \thmref{isomextensionthm} tells us not only about isomorphisms of B-models, but can also be used to find new isomorphisms between A-models.


%

\end{example}

\begin{example}

Singularity theory suggests that adding \emph{quadratic forms} in distinct variables to polynomials will do nothing to affect the type of singularity defined. However, it is not immediately clear how the orbifolded Milnor ring structure of such an augmented polynomial will be affected.

Building off of \exampleref{examplefirst}, for each odd integer $n > 2$ let $W^{(1)}_{n} = x^{2} + y^{2n}$, $W^{(2)}_{n} = x^{2} + xy^{n} + y^{2n}$, and $W^{(3)}_{n} = x^{2} + xy^{n}$. Consider also the polynomial $V = z^{2} + w^{2}$. We already know that $\mathcal{Q}_{W^{(1)}_{n}}$, $\mathcal{Q}_{W^{(2)}_{n}}$, and $\mathcal{Q}_{W^{(3)}_{n}}$ are isomorphic to each other under equivariant maps. We also immediately see that $\mathcal{Q}_{V}$ is isomorphic to $\mathcal{Q}_{V}$ under the identity map, which is equivariant.

The group $G = \langle (\frac{1}{2}, \frac{1}{2}) \rangle$ preserves each of $W^{(1)}_{n}$, $W^{(2)}_{n}$, $W^{(3)}_{n}$, and $V$. Note also that the pair $(V,G)$ is well behaved. Let $G_{1} = \{(0,0,0,0)\}$, $G_{2} = G \times \{(0,0)\} = \langle (\frac{1}{2},\frac{1}{2}, 0,0)\rangle$, $G_{3} = \{(0,0)\} \times G = \langle (0,0,\frac{1}{2},\frac{1}{2})\rangle$, and $G_{4} = G \times G = \langle (\frac{1}{2},\frac{1}{2},0,0), (0,0,\frac{1}{2},\frac{1}{2})\rangle$. By \corref{isomextensioncorollary}, we have following B-model isomorphisms:
\begin{align*}
\B[W^{(1)}_{n} + V, G_{1}] &\isom \B[W^{(2)}_{n} + V, G_{1}] \isom \B[W^{(3)}_{n} + V, G_{1}], \\
\B[W^{(1)}_{n} + V, G_{2}] &\isom \B[W^{(2)}_{n} + V, G_{2}] \isom \B[W^{(3)}_{n} + V, G_{2}], \\
\B[W^{(1)}_{n} + V, G_{3}] &\isom \B[W^{(2)}_{n} + V, G_{3}] \isom \B[W^{(3)}_{n} + V, G_{3}], \\
\B[W^{(1)}_{n} + V, G_{4}] &\isom \B[W^{(2)}_{n} + V, G_{4}] \isom \B[W^{(3)}_{n} + V, G_{4}].
\end{align*}

We see that in these cases, equivalent singularities still yield the same orbifolded Milnor ring structure. We can take this one step further. If $(W,G)$ and $(V,G)$ are any well-behaved pairs where $W$ is singularity equivalent to $V$, and $(U,H)$ is a well-behaved pair where $U$ is a quadratic form in distinct variables from $W$, $V$ and $H$ is some orbifold group for $U$, we can apply \corref{isomextensioncorollary} to find that $\B[W + U, G \times H] \isom \B[V + U, G \times H]$.

\end{example}

%
%


\section{References}

\begingroup
\renewcommand{\section}[2]{}%

\endgroup

\end{document}